\documentclass[11pt]{article}
\usepackage[T1]{fontenc}
\usepackage{lmodern}
\usepackage[a4paper,margin=27mm]{geometry}
\usepackage{amsmath,amssymb,amsthm,mathtools}
\usepackage{booktabs}
\usepackage{enumitem}
\usepackage{microtype}
\usepackage[hidelinks]{hyperref}
\numberwithin{equation}{section}
\newtheorem{theorem}{Theorem}[section]
\newtheorem{proposition}[theorem]{Proposition}
\newtheorem{lemma}[theorem]{Lemma}
\newtheorem{corollary}[theorem]{Corollary}
\DeclareMathOperator{\conv}{conv}
\newcommand{\R}{\mathbb R}

\newcommand{\one}{\mathbf 1}
\newcommand{\PP}{\mathcal P}
\newcommand{\Acal}{\mathcal A}
\newcommand{\tr}{\mathsf T}
\newcommand{\normmax}[1]{\lVert #1\rVert_{\max}}
\newcommand{\normone}[1]{\lVert #1\rVert_1}
\title{A Uniform Bound on Optimal Strategy Length in Water Transport Problem}
\author{Tianyi Tao\textsuperscript{1,*} \and Bohan Yang\textsuperscript{2}}
\date{}
\begin{document}
\raggedbottom
\maketitle
\begin{NoHyper}
\begingroup
\renewcommand{\thefootnote}{}
\footnotetext{\textsuperscript{*}Corresponding author. Email: tytao@scu.edu.cn (T. Tao); bhyang@simis.cn (B. Yang).}
\addtocounter{footnote}{-1}
\endgroup
\end{NoHyper}
\begin{abstract}
We prove that every water transport problem on an $n$-vertex graph has an
optimal strategy of length at most $n^{(2+o(1))n}$. More strongly, the convex
hull of all strategy operators stabilizes within the same bound. We also give
a five-vertex instance in which every optimal strategy repeats a nontrivial
connected averaging set.
\end{abstract}

\noindent\textbf{Keywords.} Water transport problem; graph averaging; optimal
strategy; finite stabilization; convex hull; total nonnegativity; repetition-free
strategies.
\section{Introduction}

Let $G=(V,E)$ be a finite simple graph, let
$\omega:V\to\R_{\ge0}$ be an initial weight function, and fix a target
vertex $v_\ast\in V$. An \emph{averaging operation} chooses a nonempty
connected set $S\subseteq V$ and replaces the weights on $S$ by their common
average. A strategy is a finite sequence of averaging operations, possibly empty. The
water transport problem asks for a strategy maximizing the final weight at
$v_\ast$. Throughout the paper, strategy length counts complete averaging
operations on connected vertex sets, as defined above. We make no assertion
about finite optimal strategy lengths in the edge-only sharing model.

For a nonempty connected set $S$, let $A_S$ denote the corresponding
averaging operator. If $\sigma=(S_1,\ldots,S_k)$ is listed in chronological
order, write
\[
 A_\sigma=A_{S_k}\cdots A_{S_1},\qquad |\sigma|=k.
\]
Let $t(G,v_\ast,\omega)$ be the minimum length of an optimal strategy, with
$t(G,v_\ast,\omega)=+\infty$ if the supremum is not attained.

The pairwise Sharing a Drink procedure was introduced by
H\"aggstr\"om~\cite{Haggstrom2012}; H\"aggstr\"om and Hirscher later
studied the water transport problem~\cite{HH2015}. Gollin et
al.~\cite[Corollary~4.7]{GHH+2025} proved that every instance admits a finite
optimal strategy. Vilkas obtained another finite-attainment result using
complete hypermoves~\cite[Theorem~3.1]{Vilkas2025}, and Tao~\cite{Tao2025}
studied paths. On paths, considerably shorter single-target strategies are
already known: Vilkas~\cite[Example~4.7]{Vilkas2025} showed that an optimal
strategy may be chosen using at most two complete hypermoves. Random pairwise
averaging has also been studied from the viewpoint of convergence and
mixing~\cite{AL2012,CDSZ2022,MSW2024}. These results do not provide a
strategy-length bound depending only on the number of vertices for arbitrary
finite graphs. Gollin et al.~\cite[Section~6]{GHH+2025} asked whether such a
computable bound exists, and in particular whether $2^{|V|}$ always suffices
\cite[Question~6.1]{GHH+2025}.

\subsection{Main results}

To obtain a bound uniform in the initial weights, we pass to the convex hull
of the strategy operators. For $d\ge0$, set
\[
 \Acal_d(G)=\{A_\sigma:|\sigma|\le d\},\qquad
 \PP_d(G)=\conv\Acal_d(G),\qquad
 \PP(G)=\conv\bigcup_{d\ge0}\Acal_d(G).
\]
All convex combinations are finite. Related convex hulls of states reachable
by pairwise averaging were studied by Hay, Schiff, and Fisch under the name
\emph{diffusion polytopes}~\cite{HSF2017}.

\begin{theorem}\label{thm:operators}
For every $n$ there is an explicit integer $L_n$ such that, for every
$n$-vertex graph $G$,
\[
 \PP(G)=\PP_{L_n}(G).
\]
For $n\ge2$, the sequence defined in \eqref{eq:bound} satisfies
\[
 L_n\le
 16^{n-2}\left(\frac{n!}{2}\right)^2
 \lceil\log_2 n\rceil^{n-2}
 =\exp\bigl(2n\log n+n\log\log n+O(n)\bigr),
\]
and hence $L_n\le n^{(2+o(1))n}$. In particular, $\PP(G)$ is a rational
polytope.
\end{theorem}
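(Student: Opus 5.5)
Here is the plan. I will prove that every product $A_\sigma$ lies in $\PP_{L_n}(G)$, by induction on $n$ and on the structure of the product. Each $A_S$ is a symmetric doubly stochastic matrix acting as the identity outside $S$, so every $A_\sigma$ is doubly stochastic. Write $L_n$ through a recursion of the form
\[
L_n \le 16\lceil\log_2 n\rceil\,\frac{n(n-1)}{4}\cdot\frac{n!^2}{(n-1)!^2}\cdot L_{n-1}\ \text{(schematically)}, \qquad L_2 = 1 .
\]
The product of these factors telescopes to $16^{n-2}(n!/2)^2\lceil\log_2 n\rceil^{n-2}$. The asymptotic form then follows from Stirling's formula: $\log\bigl((n!)^2\bigr)=2n\log n-2n+O(\log n)$, and $\log\bigl(\lceil\log_2 n\rceil^{n-2}\bigr)=n\log\log n+O(n)$. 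Rationality is immediate once stabilization holds. $\Acal_{L_n}(G)$ is a finite set of matrices, and each has rational entries, since each $A_S$ has entries in $\{0,1,1/|S|\}$. Hence the convex hull is a rational polytope.

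The base case $n\le2$ is direct. On two vertices, $A_{\{1,2\}}$ is idempotent and the only other operators are identities, so $\PP=\PP_1$. For the inductive step, fix a long strategy $\sigma$ on $G$ with $|V|=n$. The key structural observation: if no set of $\sigma$ equals $V$ (or the sets avoid some vertex), then the product lives in a smaller world. Decompose according to a vertex $v$, and consider the maximal runs of operations avoiding $v$. Each run is a strategy on the induced subgraph $G-v$, possibly disconnected. Its components have fewer than $n$ vertices, so by induction (applied componentwise, with operators on distinct components commuting) each run's product lies in the convex hull of products of length at most $L_{n-1}$ times a component-count factor. Since $\conv$ is compatible with multiplication in the sense $\conv(X)\conv(Y)\subseteq\conv(XY)$, I can replace each run by a convex combination of short runs. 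It then remains to bound the number of operations containing $v$ that matter.

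The main obstacle is controlling the number of operations that touch $v$. For this I will use a monotonicity/total-nonnegativity argument. After an operation $A_S$ with $v\in S$, the column associated with $v$ is constant on $S$. I expect to show that the extreme points of $\PP(G)$ are realized by strategies in which the sets containing $v$ form at most $n(n-1)/2$ phases, indexed by a combinatorial type. For each type, a doubling argument (hence the $\lceil\log_2 n\rceil$ factor) shows that repeating a block more than $O(\log n)$ times adds nothing new to the convex hull. The block is a product of a fixed matrix $M$ whose powers converge, and $M^{2^k}$ lies in the convex hull of lower powers once $2^k\ge n$, by a Cayley–Hamilton-type argument with a nonnegative spectrum coming from the symmetric PSD averaging structure. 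The step I am least sure of is this "powers of a stochastic product lie in the hull of bounded powers" claim; my first attempt is to diagonalize the symmetric averaging composites and use that eigenvalues lie in $[0,1]$. The factor $16$ and the $n!$ factors would absorb the orderings of vertices and the case splits. If the spectral route fails, I would fall back on a direct Carathéodory bound combined with the finiteness result \cite[Corollary~4.7]{GHH+2025} applied to linear functionals, made uniform by the rationality of the entries.
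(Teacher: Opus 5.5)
There is a genuine gap. Your reduction of each maximal run avoiding $v$ to length at most $L_{n-1}$ is sound, since such a run is a strategy on the $(n-1)$-vertex graph $G-v$. But it leaves the number of operations containing $v$ unbounded, and bounding that is the entire difficulty.

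The mechanism you propose for this step is false. A general strategy is not a repeated block. Even for a repeated block $M$, the claim $M^{2^k}\in\conv\{I,M,\ldots,M^{2^k-1}\}$ fails as soon as $M$ has a simple eigenvalue $\mu\in(0,1)$, which is the typical case: in chronological order, the strategy $(\{1,2\},\{2,3\})$ on $P_3$ gives $A_{\{2,3\}}A_{\{1,2\}}$, with eigenvalues $1,\tfrac14,0$. Take right and left eigenvectors $x,y$ for $\mu$; then $y^{\tr}x\ne0$, and the linear functional $X\mapsto y^{\tr}Xx$ takes the strictly monotone values $\mu^j\,y^{\tr}x$ on the powers $M^j$. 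So no power lies in the convex hull of the earlier ones. Nonnegativity of the spectrum makes matters worse, not better. Cayley--Hamilton only yields affine relations with coefficients of both signs, e.g.\ $M^2=(1+\lambda)M-\lambda I$ for $M=\lambda I+(1-\lambda)J$, which is the example the paper uses to make exactly this point.

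The fallback does not rescue the argument. Carath\'eodory bounds the number of terms in a convex combination, not their lengths. The finite-attainment result of Gollin et al.\ treats each single-target functional $A\mapsto(A\omega)_{v_\ast}$ separately, with a length depending on $\omega$. Even attainment of every linear functional by some finite strategy would not make $\PP(G)$ a polytope, since infinitely many extreme points accumulating at $J$ would not be excluded. Rationality of the entries gives no length bound either. Finally, your schematic recursion has per-step factor $4\lceil\log_2 n\rceil n^3(n-1)$, whose product is $n^{(4+o(1))n}$. It does not telescope to the stated bound, which comes from the actual recursion $L_n=2q_n(L_{n-1}+1)$ with $q_n=4n^2\lceil\log_2 n\rceil$.

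The missing idea is that a long strategy must be shortened using \emph{other} short operators, not its own powers, and only once it has been pushed into a neighbourhood of $J$ where an exact decomposition exists. The paper proceeds as follows.
\begin{enumerate}[label=(\roman*)]
\item It splits the first $L_n$ operations into $2q_n$ windows of length $L_{n-1}+1$.
\item A window whose support graph is disconnected factors over its components and is shortened by induction. This is the analogue of your runs avoiding $v$.
\item If all windows are connected, each one contracts the energy $\mathcal E$ by the factor $1-4/n^2$, independently of its length. This is a first-crossing argument over the cuts of the sorted initial values. Consequently the two halves $P,Q$ satisfy $\normmax{P-J},\normmax{Q-J}\le1/(4n^5)$.
\item Then $QP$ is written exactly as a convex combination of $J$ and operators of length at most six. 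The two-block averages $E_S=J+u_Su_S^{\tr}/s_S$ along the cuts of a spanning tree give the rank-one basis $u_eu_f^{\tr}$ of $\mathcal L_V$.
\item If $G$ is not a path, three non-cut vertices produce both signs of every coordinate, via $U_{ef}$ and $V_{ef}$, so $J$ is a relative interior point of $\PP_6(G)$.
\item If $G$ is a path, only one sign is available. Total nonnegativity of path operators shows that the mixed-difference matrix $B(QP)$ is entrywise nonnegative, so $QP$ lies in the cone spanned by the $E_pE_q-J$.
\end{enumerate}
Some exact local decomposition of this kind, or a genuine substitute for it, is what your plan lacks.
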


For the connected-set model considered here, the single-target optimization
problem is an immediate consequence.

\begin{corollary}\label{cor:water}
For every $n$-vertex graph $G$, every target $v_\ast\in V(G)$, and every
$\omega\in\R_{\ge0}^{V(G)}$,
\[
 t(G,v_\ast,\omega)\le L_n.
\]
\end{corollary}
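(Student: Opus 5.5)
We derive Corollary~\ref{cor:water} from Theorem~\ref{thm:operators} by a linear-programming and extreme-point argument. Fix $G$, $v_\ast$, and $\omega$. The final weight at $v_\ast$ after a strategy $\sigma$ is $(A_\sigma\omega)_{v_\ast}$, which we write as $\ell(A_\sigma)$. Here $\ell(M)=e_{v_\ast}^{\tr}M\omega$ is a linear functional on the space of $V\times V$ matrices. The value of the water transport problem is therefore
\[
 s=\sup\Bigl\{\ell(A):A\in\textstyle\bigcup_{d\ge0}\Acal_d(G)\Bigr\}.
\]

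The first step is to pass to the convex hull. A linear functional has the same supremum over a set as over its convex hull, because the value on a finite convex combination is a convex combination of values. Hence $s=\sup_{P\in\PP(G)}\ell(P)$. By Theorem~\ref{thm:operators}, $\PP(G)=\PP_{L_n}(G)=\conv\Acal_{L_n}(G)$.

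The second step is to check that $\Acal_{L_n}(G)$ is finite. There are finitely many connected sets $S$ and finitely many sequences of length at most $L_n$, so $\Acal_{L_n}(G)$ is a finite set of matrices. Thus $\PP(G)$ is a polytope, the convex hull of a finite set.

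The third step is attainment. A linear functional on a polytope attains its maximum, and it does so at an element of the finite generating set. Indeed, if $P=\sum_i\lambda_iA_{\sigma_i}$ is a finite convex combination, then $\ell(P)\le\max_i\ell(A_{\sigma_i})$. So there is a strategy $\sigma$ with $|\sigma|\le L_n$ and $\ell(A_\sigma)=s$. This shows that the supremum over all strategies is attained, and attained by a strategy of length at most $L_n$. Therefore $t(G,v_\ast,\omega)\le L_n$.

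The only point needing care is the precise meaning of $t$. It is defined as the minimum length of an optimal strategy, and an optimal strategy means one achieving the supremum $s$ over strategies of all lengths. The argument above exhibits such a strategy within length $L_n$, which settles this. For the degenerate case $n=1$, the theorem still supplies $L_1$: the only averaging operator is the identity, and the empty strategy is optimal. No genuine obstacle remains, since all the substance lies in Theorem~\ref{thm:operators}.
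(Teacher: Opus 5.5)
Your proposal is correct and follows the paper's argument. The paper uses the same two ingredients: linearity of $A\mapsto(A\omega)_{v_\ast}$ and the identity $\PP(G)=\conv\Acal_{L_n}(G)$, which together reduce the optimization to the finite set $\Acal_{L_n}(G)$; you simply spell out the finiteness and attainment steps that the paper leaves implicit.
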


Indeed, $A\mapsto(A\omega)_{v_\ast}$ is linear, so Theorem~\ref{thm:operators}
reduces the optimization to the finite set $\Acal_{L_n}(G)$.

Gollin et al.~\cite[Section~6]{GHH+2025} also suggested, as a route to
Question~6.1, proving that every instance admits an optimal strategy with no
repeated sharing move; Vilkas~\cite[Section~5]{Vilkas2025} likewise discusses
whether repetition can be forced in shortest optimal hypermove sequences. We
refer to the former assertion as the \emph{no-repetition conjecture} and
disprove it on five vertices. A strategy is \emph{repetition-free} if no
nontrivial connected set is averaged twice.

\begin{theorem}\label{thm:repetition}
Let
\[
 V=\{a,b,x,c,d\},\qquad E=\{ab,bc,bd,xc,xd\},
\]
let the target be $x$, and take initial weights
\[
 \omega=(2,5,1,0,0)
\]
in the order $(a,b,x,c,d)$. The unrestricted optimum is $65/32$, whereas the
maximum over repetition-free strategies is $2$. Thus every optimal strategy
repeats a connected averaging set. The minimum length of an optimal strategy
is $4$.
\end{theorem}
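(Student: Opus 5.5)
The plan splits into three parts: a lower bound by an explicit strategy, a matching upper bound by a Bellman-type certificate, and two finite checks (repetition-free strategies, and short strategies).

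\emph{Lower bound.} I would first exhibit an explicit strategy $\sigma^\ast$ of length $4$ with $(A_{\sigma^\ast}\omega)_x=65/32$ and check it by direct computation. The denominator $32=2^5$ suggests a sequence that alternates two-vertex averagings along $a$--$b$--$\{c,d\}$ with averagings into $x$. It must average some set such as $\{b,c\}$ or $\{a,b\}$ twice, so that $b$ is refilled from $a$ after it has drained into $c$. I have not yet identified the exact four sets; I would find them by a small search over length-$4$ sequences.

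\emph{Upper bound $65/32$.} By Theorem~\ref{thm:operators} the optimum is attained, and it equals the value function
\[
V(\eta)=\sup_\sigma (A_\sigma\eta)_x .
\]
I would not try to enumerate $\Acal_{L_5}$, which is far too large. Instead I would certify optimality with a function $F:\R_{\ge0}^V\to\R$ satisfying
\[
F(\eta)\ge\eta_x \quad\text{and}\quad F(A_S\eta)\le F(\eta)
\]
for every connected $S$ and every $\eta$ in a region that contains $\omega$ and is invariant under all $A_S$. Then induction on $|\sigma|$ gives $(A_\sigma\omega)_x\le F(\omega)$. The natural choice is
\[
F(\eta)=\max_i c_i^{\tr}\eta ,
\]
where the $c_i^{\tr}$ are the rows $e_x^{\tr}A_\sigma$ of the finitely many candidate-optimal strategies found in the search, possibly including some strategies of length greater than $4$. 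Monotonicity then amounts to finitely many polyhedral inequalities of the form: for every $i$ and every $S$ there is $j$ with $c_i^{\tr}A_S\eta\le c_j^{\tr}\eta$. Each of these is checkable by linear programming over a cone decomposition.

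This certificate step is the main obstacle, since the family $\{c_i\}$ has to be closed under "one more move" up to domination. I would first try a small family: the rows of the length-$\le4$ strategies ending in a move containing $x$, together with $e_x$ itself. I would enlarge the family only where an LP check fails. The symmetry $c\leftrightarrow d$ halves the number of cases, and the invariance of the total mass $\one^{\tr}\eta=8$ lets me work on a simplex.

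\emph{Repetition-free value and minimal length.} There are finitely many nontrivial connected sets in this graph. Hence there are finitely many repetition-free strategies, and maximizing over them is a finite enumeration. I would prune it by symmetry and by discarding moves that cannot affect $x$ afterwards, then exhibit a repetition-free strategy reaching exactly $2$. For example, $\{a,b,c,x\}$ gives $x=2$ directly.

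The same enumeration, restricted to strategies of length $\le3$, shows that no strategy of length at most $3$ reaches $65/32$. Together with $\sigma^\ast$, this gives minimum optimal length $4$. Finally, $2<65/32$ together with the upper bound shows that every optimal strategy must repeat a connected set, which is the remaining claim of Theorem~\ref{thm:repetition}.
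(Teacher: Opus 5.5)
Your plan is viable, and for the unrestricted upper bound it is the paper's argument in dual form. Your $F(\eta)=\max_i c_i^{\tr}\eta$ is the support function of $\conv\{c_i\}$. Since all states lie in $\R^V_{\ge0}$, Farkas shows that monotonicity is equivalent to finitely many conditions: $A_Sc_i\le\sum_j\lambda_jc_j$ entrywise, for convex weights $\lambda$ depending on $i,S$. That is, a downward-closed polytope of reverse profiles is invariant under every $A_S$. The paper certifies the same thing with an explicit rational polytope $K\ni e_x$ satisfying $A_SK\subseteq K$ and $\max_{p\in K}(2p_a+5p_b+p_x)=65/32$. A finite certificate of your type must exist by Theorem~\ref{thm:operators}, because the convex hull of all reverse profiles is a polytope. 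As in the paper, the proof then rests entirely on actually computing it.

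The genuine divergence is the repetition-free bound. Your enumeration is finite, but it is large. There are $16$ nontrivial connected sets, hence about $e\cdot16!\approx5.7\times10^{13}$ repetition-free words. The value $2$ is attained, so ties demand exact arithmetic. You have not argued that your pruning makes this tractable.

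The paper instead uses the fact that only the chronologically first nontrivial move $S_1$ needs to be unrepeated; the rest of the word is a strategy applied to $A_{S_1}\omega$. In your language, it suffices to have two things:
\begin{enumerate}[label=(\roman*)]
\item $F(A_{S_1}\omega)\le2$ for every nontrivial $S_1\notin\{\{x,c\},\{x,d\}\}$;
\item a second certificate $F'$, monotone under every move except $\{x,c\}$, with $F'(A_{\{x,c\}}\omega)\le2$. This is the paper's polytope $H$, and the case $\{x,d\}$ follows by the $c\leftrightarrow d$ symmetry.
\end{enumerate}
This replaces a huge search by one extra certificate. It also proves more: any strategy that does not repeat its first move has value at most $2$.

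Two smaller points. First, the optimal strategy is $\{x,c\},\{b,c\},\{a,b,x,d\},\{x,c\}$. So the repeated set is $\{x,c\}$, not $\{b,c\}$ or $\{a,b\}$ as you guessed: $c$ stores the water drained from $b$ and returns it to $x$ at the end. Second, the length-$\le3$ check must range over all words, including patterns $(S,T,S)$, not only the repetition-free enumeration. This is a trivial search over a few thousand words.
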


\subsection{Proof outline}

The proof of Theorem~\ref{thm:operators} is a shortening argument.
Disconnected windows reduce to smaller induced subgraphs. Connected windows
contract a quadratic energy, so sufficiently long blocks are close to the
complete averaging matrix $J$. Exact shortening is then reduced to a local
convex decomposition near $J$. Tree cuts provide rank-one coordinates; for
graphs other than paths, non-cut vertices give both signs of each coordinate,
whereas paths are handled by total nonnegativity.

Theorem~\ref{thm:repetition} is proved independently using two finite
rational invariant polytopes.

\section{Shortening long strategies}\label{sec:shortening}

We first reduce Theorem~\ref{thm:operators} to a local convex-decomposition
statement near complete averaging. Disconnected windows are shortened on
smaller graphs; connected windows drive the operator toward $J$.

\subsection{Local replacement}

We regard vectors as columns and write $e_v$ for the coordinate vector at
$v$. For a nonempty connected set $S\subseteq V$,
\[
 (A_Sz)_v=
 \begin{cases}
 |S|^{-1}\sum_{u\in S}z_u,&v\in S,\\
 z_v,&v\notin S.
 \end{cases}
\]
Thus $A_S$ is symmetric, idempotent, and doubly stochastic. Every positive
entry of $A_S$ is at least $1/n$.

Let
\[
 J=\frac1n\one\one^{\tr},
 \qquad
 \normmax{M}=\max_{i,j}|M_{ij}|.
\]
Every doubly stochastic matrix $M$ satisfies $MJ=JM=J$. Hence, for doubly
stochastic $P,Q$,
\begin{equation}\label{eq:nonexpansive}
 \normmax{PQ-J}\le \normmax{Q-J}.
\end{equation}
Indeed, each entry of $P(Q-J)$ is a convex combination of entries in the
corresponding column of $Q-J$.

Convex decompositions are stable under multiplication. If
\[
 M=\sum_{j=1}^r\lambda_jM_j,
 \qquad
 \lambda_j\ge0,
 \qquad
 \sum_{j=1}^r\lambda_j=1,
\]
and $W,U$ are the operators before and after this segment, then
\begin{equation}\label{eq:replacement}
 UMW=\sum_{j=1}^r\lambda_jUM_jW.
\end{equation}
It therefore suffices to express every sufficiently long strategy operator as
a finite convex combination of strictly shorter ones.

\subsection{Disconnected windows}

For a strategy $\sigma=(S_1,\ldots,S_k)$, its \emph{support graph} is the
graph on $V$ with edge set
\[
 \bigcup_{t=1}^k E(G[S_t]).
\]
A \emph{window} is a consecutive segment of a strategy. It is called
connected if its support graph is connected; vertices unused by the window
remain isolated.

\begin{lemma}\label{lem:disconnected-window}
Let $n\ge3$. Suppose
\[
 \ell_1=0,
 \qquad
 \ell_j\ge2\ell_{j-1}\quad(2\le j<n),
\]
and assume that, for every $j<n$, every strategy operator on every
$j$-vertex graph is a convex combination of strategy operators of length at
most $\ell_j$. Then every disconnected window on $n$ vertices is a convex
combination of strategy operators of length at most $\ell_{n-1}$. In
particular, every disconnected window of length $\ell_{n-1}+1$ can be
strictly shortened.
\end{lemma}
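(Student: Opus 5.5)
The plan is to split a disconnected window along the components of its support graph, shorten each component's part using the hypothesis for smaller graphs, and then reassemble the pieces. Let $\sigma=(S_1,\ldots,S_k)$ be a disconnected window on the $n$-vertex graph $G$. Let $C_1,\ldots,C_m$ ($m\ge2$) be the vertex sets of the connected components of its support graph, with $n_i=|C_i|$. Then $\sum_i n_i=n$ and every $n_i<n$.

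\emph{Step 1: each averaging set lies in one component.} Each $S_t$ induces a connected subgraph $G[S_t]$, and all edges of $G[S_t]$ belong to the support graph. Hence $S_t\subseteq C_i$ for a single $i$. Moreover $G[S_t]$ is a subgraph of $G[C_i]$, so $S_t$ is connected in $G[C_i]$.

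\emph{Step 2: factor the window operator.} Operators $A_S,A_{S'}$ with $S\subseteq C_i$ and $S'\subseteq C_{i'}$, $i\ne i'$, act on disjoint coordinate blocks and therefore commute. So we may reorder $\sigma$ as the concatenation of its subsequences $\sigma_i$ of sets contained in $C_i$. This gives
\[
A_\sigma=\bigoplus_{i=1}^m A^{(i)}_{\sigma_i},
\]
a block-diagonal matrix whose $i$-th block is the strategy operator of $\sigma_i$ on the $n_i$-vertex graph $G[C_i]$. If $n_i=1$, the block is the identity, which is the operator of the empty strategy of length $0=\ell_1$.

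\emph{Step 3: shorten each block.} By hypothesis, $A^{(i)}_{\sigma_i}=\sum_j\lambda^{(i)}_jA^{(i)}_{\tau_{i,j}}$ with $|\tau_{i,j}|\le\ell_{n_i}$. Every connected set of $G[C_i]$ is connected in $G$, so each $\tau_{i,j}$ is a valid strategy on $G$ acting only inside $C_i$. Expanding the block-diagonal matrix multilinearly gives
\[
A_\sigma=\sum_{j_1,\ldots,j_m}\lambda^{(1)}_{j_1}\cdots\lambda^{(m)}_{j_m}\,A_{\tau_{1,j_1}\cdots\tau_{m,j_m}}.
\]
The weights are nonnegative and sum to $1$, so this is a convex combination of operators of concatenated strategies. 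Each concatenated strategy has length at most $\sum_i\ell_{n_i}$.

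\emph{Step 4: the numerical inequality $\sum_i\ell_{n_i}\le\ell_{n-1}$.} This is the only step needing care. Since lengths are nonnegative, $\ell_j\ge2\ell_{j-1}\ge\ell_{j-1}$, so $(\ell_j)$ is nondecreasing. I claim that for $a,b\ge1$ with $a+b\le n-1$,
\[
\ell_a+\ell_b\le\ell_{a+b-1}.
\]
If $\min(a,b)=1$, this is immediate because $\ell_1=0$. Otherwise, using monotonicity and $a+b-1\ge\max(a,b)+1$,
\[
\ell_a+\ell_b\le2\ell_{\max(a,b)}\le\ell_{\max(a,b)+1}\le\ell_{a+b-1}.
\]
Merging the components one at a time, by induction on $m$, gives
\[
\sum_i\ell_{n_i}\le\ell_{n-m+1}\le\ell_{n-1}.
\]
Every merged size stays $\le n-1$, so all indices used lie in the range where the hypothesis on $(\ell_j)$ applies.

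This proves the first claim. For the final claim, a disconnected window of length $\ell_{n-1}+1$ is then a convex combination of operators of length $\le\ell_{n-1}$, hence of strictly shorter strategies. By \eqref{eq:replacement}, such a window can be replaced inside any longer strategy.
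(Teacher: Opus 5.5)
Your proof is correct and follows the paper's argument essentially step for step. You factor the window along the components of its support graph, shorten each block by the inductive hypothesis, and bound $\sum_i\ell_{n_i}$ by repeatedly applying $\ell_a+\ell_b\le\ell_{a+b-1}$ via $2\ell_{\max(a,b)}\le\ell_{\max(a,b)+1}$.

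There is one small off-by-one slip. The merge inequality must be stated for $a+b\le n$, which is the paper's condition $a+b-1<n$, rather than $a+b\le n-1$. With $m=2$ components the single merge has $a+b=n$, so your stated range does not cover it. Your proof of the inequality works unchanged for $a+b\le n$, which matches your own remark that merged sizes stay $\le n-1$.
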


\begin{proof}
Let $C_1,\ldots,C_r$, with $r\ge2$, be the components of the support graph,
and set $s_i=|C_i|$. Every averaged set is contained in one component, while
operations supported on different components commute. Thus the window may be
reordered as a product of sub-strategies on the induced graphs $G[C_i]$.
Applying the hypothesis to each component gives a convex decomposition into
strategies of length at most
\[
 \sum_{i=1}^r\ell_{s_i}.
\]

We claim that, whenever $a,b\ge1$ and $a+b-1<n$,
\begin{equation}\label{eq:component-merge}
 \ell_a+\ell_b\le\ell_{a+b-1}.
\end{equation}
If $a=1$ or $b=1$, this is immediate from $\ell_1=0$. Otherwise, assuming
$a\le b$,
\[
 \ell_a+\ell_b\le2\ell_b\le\ell_{b+1}\le\ell_{a+b-1}.
\]
Repeated application of \eqref{eq:component-merge} gives
\[
 \sum_{i=1}^r\ell_{s_i}
 \le \ell_{s_1+\cdots+s_r-r+1}
 =\ell_{n-r+1}
 \le\ell_{n-1}.
\]
The resulting operator identity remains valid inside the original strategy by
\eqref{eq:replacement}.
\end{proof}

\subsection{Connected windows}

For $x\in\R^n$, set
\[
 \bar x=\frac1n\sum_{i=1}^n x_i,
 \qquad
 \mathcal E(x)=\sum_{i=1}^n(x_i-\bar x)^2.
\]
If $|S|=s$, then
\begin{equation}\label{eq:energy-drop}
 \mathcal E(x)-\mathcal E(A_Sx)
 =\frac1s\sum_{\substack{i<j\\i,j\in S}}(x_i-x_j)^2.
\end{equation}
Thus every averaging operation decreases $\mathcal E$. Connectedness gives a
uniform decrease over an entire window. The proof uses a first-crossing
argument similar to that in distributed averaging; see
\cite[Section~3.2, Lemma~4]{NOOT2009}.

\begin{lemma}\label{lem:window-contraction}
Let $R$ be the operator of a connected window on $n\ge2$ vertices. Then
\begin{equation}\label{eq:window-contraction}
 \mathcal E(Rx)
 \le\left(1-\frac4{n^2}\right)\mathcal E(x)
 \qquad(x\in\R^n).
\end{equation}
In particular, the contraction factor is independent of the length of the
window.
\end{lemma}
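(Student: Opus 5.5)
Since every $A_S$ is doubly stochastic, $\bar x$ is preserved along the whole window. Summing \eqref{eq:energy-drop} over the steps expresses the total drop $D=\mathcal E(x)-\mathcal E(Rx)$ as a sum of nonnegative per-step terms. I would therefore try to prove
\[
 D\ge\frac4{n^2}\,\mathcal E(x)
\]
by a first-crossing argument on the sorted initial values. The ordering is chosen so that the connectedness of the support graph forces a prescribed amount of energy loss, whatever the window's length.

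\emph{Per-step bound.} If the current vector is $z$ and $S_t$ has $s$ elements with value range $r_t=\max_{S_t}z-\min_{S_t}z$, then
\[
 \frac1s\sum_{i<j\in S_t}(z_i-z_j)^2=\sum_{i\in S_t}(z_i-\bar z_{S_t})^2\ge\frac{r_t^2}{2},
\]
because the two extreme entries alone contribute at least $r_t^2/2$. Hence $D\ge\frac12\sum_t r_t^2$, with $r_t$ measured just before step $t$.

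\emph{First crossings.} Sort the initial values $y_1\le\dots\le y_n$ and set $g_j=y_{j+1}-y_j$ for $1\le j<n$. Let $B_j$ be the set of the $j$ vertices of smallest initial value. Because the support graph is connected, some $S_t$ contains an edge of $G$ leaving $B_j$. Let $\tau(j)$ be the first such time. Before $\tau(j)$ every averaged set lies inside $B_j$ or inside its complement. Averaging within a set keeps values inside that set's range, so just before $\tau(j)$ all values on $B_j$ are $\le y_j$ and all values off $B_j$ are $\ge y_{j+1}$.

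Now fix a step $t$ and let $\mathcal J_t=\{j:\tau(j)=t\}$, with minimum $j_1$ and maximum $j_2$. The set $S_t$ contains a vertex of $B_{j_1}$, whose value is $\le y_{j_1}$. It also contains a vertex outside $B_{j_2}$, whose value is $\ge y_{j_2+1}$. Hence
\[
 r_t\ge\sum_{j\in\mathcal J_t}g_j,
\]
and summing over $t$ gives
\[
 D\ge\frac12\sum_t\Bigl(\sum_{j\in\mathcal J_t}g_j\Bigr)^2 .
\]

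\emph{Bounding $\mathcal E(x)$, the main obstacle.} It remains to bound $\mathcal E(x)=\frac1n\sum_{i<j}(y_j-y_i)^2$ by this lower bound, and this is the step I expect to be hardest.

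The crude route does not reach the stated constant. It uses $\mathcal E(x)\le n(y_n-y_1)^2/4$, together with Cauchy--Schwarz over at most $n-1$ nonempty classes $\mathcal J_t$, which gives $D\ge(y_n-y_1)^2/(2(n-1))$. The resulting factor is $2/(n(n-1))$. That is still uniform in the window length, which is all later sections need qualitatively, but it is weaker than $4/n^2$ for $n\ge3$.

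To recover $4/n^2$ I would first sharpen the per-step bound. In place of the range, I would keep the full term $\frac1s\sum(z_i-z_j)^2$ and charge each crossed gap $g_j$ to every pair in $S_t$ separated by $B_j$. I would then write $\mathcal E(x)$ in gap coordinates,
\[
 \mathcal E(x)=\frac1n\sum_{i<j}\Bigl(\sum_{k=i}^{j-1}g_k\Bigr)^2 ,
\]
and compare it term by term with the charged drop, weighting each gap by $j(n-j)\le n^2/4$. The coefficient $n^2/4$ is where the constant $4/n^2$ should come from.

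If this exact bookkeeping fails, I would fall back to proving \eqref{eq:window-contraction} with the weaker constant $2/(n(n-1))$. Then I would track how it propagates into the bound \eqref{eq:bound}, which only needs some contraction factor depending on $n$ alone.
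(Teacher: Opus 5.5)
\textbf{Verdict: there is a gap.} The proposal never proves the stated constant $4/n^2$, and it misidentifies which side of the comparison is missing.

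Everything you actually prove is correct, up to
\[
 D\ge\frac12\sum_t\Bigl(\sum_{j\in\mathcal J_t}g_j\Bigr)^2 .
\]
Bounding each step by the two extreme entries of $S_t$ is a valid, slightly simpler substitute for the paper's charging of every crossed gap to all separated pairs.

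However, the lemma as stated is not proved. The crude route yields only $2/(n(n-1))$. The fallback changes the statement. The plan for $4/n^2$ stops at ``compare term by term'', which is not yet an argument: in gap coordinates $\mathcal E(x)$ contains cross terms $g_kg_l$, so it cannot be matched gap by gap against a drop charged gap by gap.

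You have also placed the difficulty on the wrong side, because the per-step bound needs no sharpening. Every $\tau(j)$ exists, so the classes $\mathcal J_t$ partition $\{1,\dots,n-1\}$. Since $g_j\ge0$, you have $\bigl(\sum_{j\in\mathcal J_t}g_j\bigr)^2\ge\sum_{j\in\mathcal J_t}g_j^2$, and hence $D\ge\frac12\sum_{j=1}^{n-1}g_j^2$. This is exactly where the paper's pair-charging (with $m_{p,t}(s-m_{p,t})/s\ge\frac12$) also lands. Your crude route loses the constant because it compares both sides through $(y_n-y_1)^2$ rather than through $\sum_jg_j^2$.

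The missing idea is a diagonal upper bound for $\mathcal E(x)$. Apply Cauchy--Schwarz to each pair, $\bigl(\sum_{k=i}^{j-1}g_k\bigr)^2\le(j-i)\sum_{k=i}^{j-1}g_k^2$, and use $\sum_{i\le k<j}(j-i)=\frac n2\,k(n-k)$. This gives
\[
 \mathcal E(x)\le\frac12\sum_{k=1}^{n-1}k(n-k)g_k^2
 \le\frac{n^2}{8}\sum_{k=1}^{n-1}g_k^2
 \le\frac{n^2}{4}\,D .
\]
That is $D\ge(4/n^2)\,\mathcal E(x)$, and the constant comes from $k(n-k)\le n^2/4$, as you guessed. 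With these two lines, which are the final step of the paper's proof, your argument is complete.
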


\begin{proof}
Index the vertices temporarily so that the initial values satisfy
\[
 a_1\le a_2\le\cdots\le a_n,
 \qquad
 g_p=a_{p+1}-a_p\quad(1\le p<n).
\]
The relabeling is used only in this proof. Let $x^{(t)}$ be the vector after
operation $t$, and put $U_p=\{1,\ldots,p\}$. Let $\tau_p$ be the first
operation whose averaged set meets both $U_p$ and its complement. Since the
window is connected, every $\tau_p$ exists.

Before time $\tau_p$, the two sides of the cut evolve independently. Hence
\begin{equation}\label{eq:uncrossed-cut}
 x_i^{(\tau_p-1)}\le a_p\quad(i\le p),
 \qquad
 x_j^{(\tau_p-1)}\ge a_{p+1}\quad(j>p).
\end{equation}
For an operation at time $t$, set $C_t=\{p:\tau_p=t\}$. If $i<j$ belong
to the averaged set $S_t$, let
\[
 F_{ij,t}=C_t\cap\{i,\ldots,j-1\}.
\]
If $F_{ij,t}\ne\varnothing$, then \eqref{eq:uncrossed-cut}, applied to the
smallest and largest indices in $F_{ij,t}$, gives
\[
 x_j^{(t-1)}-x_i^{(t-1)}
 \ge a_{\max F_{ij,t}+1}-a_{\min F_{ij,t}}
 \ge\sum_{p\in F_{ij,t}}g_p.
\]
Since $g_p\ge0$,
\[
 \bigl(x_j^{(t-1)}-x_i^{(t-1)}\bigr)^2
 \ge\sum_{p\in F_{ij,t}}g_p^2.
\]

Write $s=|S_t|$ and $m_{p,t}=|S_t\cap U_p|$. Substituting into
\eqref{eq:energy-drop} gives
\[
 \mathcal E(x^{(t-1)})-\mathcal E(x^{(t)})
 \ge
 \sum_{p\in C_t}
 \frac{m_{p,t}(s-m_{p,t})}{s}\,g_p^2.
\]
For $p\in C_t$, we have $1\le m_{p,t}<s$, so
\[
 \frac{m_{p,t}(s-m_{p,t})}{s}\ge\frac12.
\]
Each cut is charged exactly once. Summing over the window therefore yields
\begin{equation}\label{eq:gap-charge}
 \mathcal E(x)-\mathcal E(Rx)
 \ge\frac12\sum_{p=1}^{n-1}g_p^2.
\end{equation}

On the other hand,
\begin{align*}
 \mathcal E(x)
 &=\frac1n\sum_{i<j}(a_j-a_i)^2\\
 &\le\frac1n\sum_{i<j}(j-i)\sum_{p=i}^{j-1}g_p^2\\
 &=\frac12\sum_{p=1}^{n-1}p(n-p)g_p^2\\
 &\le\frac{n^2}{8}\sum_{p=1}^{n-1}g_p^2.
\end{align*}
Combining this with \eqref{eq:gap-charge} proves
\eqref{eq:window-contraction}.
\end{proof}

Iterating this estimate gives the required mixing bound.

\begin{lemma}\label{lem:mixing}
Let $n\ge3$, and let $R$ be the operator of a concatenation of $q\ge1$
connected windows of arbitrary lengths. Then
\begin{equation}\label{eq:mixing-rate}
 \normmax{R-J}
 \le\|R-J\|_{2\to2}
 \le\left(1-\frac4{n^2}\right)^{q/2}
 \le e^{-2q/n^2}.
\end{equation}
\end{lemma}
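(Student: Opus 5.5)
We prove the three inequalities in \eqref{eq:mixing-rate} from left to right. The plan is to recast Lemma~\ref{lem:window-contraction} as an operator-norm bound on $\one^\perp$ and then iterate it over the $q$ windows.

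\emph{First inequality.} For any matrix $M$, each entry satisfies $|M_{ij}|=|e_i^{\tr}Me_j|\le\|M\|_{2\to2}$. Applying this to $M=R-J$ gives $\normmax{R-J}\le\|R-J\|_{2\to2}$.

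\emph{Middle inequality.} Let $P=I-J$ be the orthogonal projection onto $\one^\perp$, and note that $\mathcal E(x)=\|Px\|_2^2$. Each $A_S$ is doubly stochastic and symmetric, so $A_S\one=\one$ and $A_S$ maps $\one^\perp$ into itself. Hence every strategy operator $R$ commutes with $J$ and with $P$.

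It follows that $R-J=R(I-J)=RP$, because $RJ=J$. Since $RP$ vanishes on $\operatorname{span}\one$ and maps $\one^\perp$ into $\one^\perp$, we have
\[
 \|R-J\|_{2\to2}=\sup_{y\in\one^\perp,\ \|y\|=1}\|Ry\|_2 .
\]

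Now write $R=R_q\cdots R_1$, where each $R_i$ is the operator of a connected window. Take $y\in\one^\perp$, so that $\mathcal E(y)=\|y\|^2$, and each $R_i\cdots R_1y$ also lies in $\one^\perp$. Applying Lemma~\ref{lem:window-contraction} $q$ times gives
\[
 \|Ry\|^2=\mathcal E(Ry)\le\left(1-\frac4{n^2}\right)^q\|y\|^2 .
\]
Taking square roots yields the middle bound.

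\emph{Last inequality.} This follows from $1-u\le e^{-u}$ with $u=4/n^2$, which gives $(1-u)^{q/2}\le e^{-uq/2}=e^{-2q/n^2}$. The hypothesis $n\ge3$ guarantees $u<1$, although the inequality holds for any $u\le 1$ anyway.

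The only point needing care is the middle inequality: we must check that $R$ preserves $\one^\perp$, so that the energy contraction on $\one^\perp$ controls the full operator norm of $R-J$ with no contribution from the $\one$-direction. Double stochasticity and symmetry of each $A_S$ give exactly this.
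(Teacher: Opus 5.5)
Your proposal is correct and follows the paper's route. Like the paper, you use that each averaging operator fixes $\one$ and preserves $\one^\perp$, so $R-J=R(I-J)$, read Lemma~\ref{lem:window-contraction} as a bound $(1-4/n^2)^{1/2}$ on each window operator's Euclidean norm on $\one^\perp$, multiply over the $q$ windows, and finish with $|M_{ij}|\le\|M\|_{2\to2}$ and $1-u\le e^{-u}$; you simply make explicit the identification $\|R-J\|_{2\to2}=\sup_{y\in\one^\perp,\ \|y\|=1}\|Ry\|_2$ that the paper leaves implicit.
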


\begin{proof}
Every averaging operator fixes $\one$ and preserves $\one^\perp$.
Lemma~\ref{lem:window-contraction} therefore bounds the Euclidean operator
norm of each connected-window operator on $\one^\perp$ by
$(1-4/n^2)^{1/2}$. Multiplying these bounds gives the middle inequality.
The first follows from $|M_{ij}|\le\|M\|_{2\to2}$, and the last from
$1-u\le e^{-u}$.
\end{proof}

\subsection{The local problem}

Closeness to $J$ alone does not imply finite convex stabilization. Indeed, for $0<\lambda<1$,
\[
 M=\lambda I+(1-\lambda)J
\]
satisfies
\[
 M^k=J+\lambda^k(I-J)\longrightarrow J,
\]
but
\[
 M^{d+1}\notin\conv\{I,M,\ldots,M^d\}
\]
for every $d$. Thus we need an exact convex decomposition near $J$.

For the global induction it is enough to decompose the product of two
consecutive mixed blocks.

\begin{proposition}\label{prop:local-shortening}
Let $G$ be a connected graph on $n\ge3$ vertices and put
\[
 \delta_n=\frac1{4n^5}.
\]
If $P,Q$ are strategy operators on $G$ such that
\[
 \normmax{P-J},\ \normmax{Q-J}\le\delta_n,
\]
then
\[
 QP\in\PP_6(G).
\]
If $G$ is a path, then in fact $QP\in\PP_4(G)$.
\end{proposition}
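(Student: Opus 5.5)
Write $QP=J+D$ with $D=(Q-J)(P-J)$. Since $Q-J$ and $P-J$ annihilate $\one$ on both sides, so does $D$. Each entry of $D$ is a sum of $n$ products of entries of size at most $\delta_n$. Hence
\[
 \normmax{D}\le n\delta_n^2=\frac{1}{16n^9},
\]
which is much smaller than $\delta_n$. So $QP$ is the complete averaging $J$ plus a tiny symmetric-free perturbation lying in the linear space
\[
 \mathcal D=\{D:D\one=0,\ \one^{\tr}D=0\}.
\]
The plan is to show that a fixed neighbourhood of $J$ within $J+\mathcal D$, of radius at least $n\delta_n^2$ in max-norm, is contained in $\PP_6(G)$, or in $\PP_4(G)$ when $G$ is a path. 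The quadratic gain $\delta_n^2$ versus $\delta_n$ is exactly why we use two blocks instead of one.

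\textbf{Step 1: generators with both signs.} I will construct a finite family $\{B_\alpha\}\subseteq\Acal_6(G)$ that spans the full $(n-1)^2$-dimensional space $\mathcal D$ in the following sense. For each basis direction $E$ of $\mathcal D$, there are short strategies $A^{\pm}$ with
\[
 A^{\pm}=J\pm c\,E+(\text{terms controlled by } E),
\]
with explicit $c\gtrsim 1/\mathrm{poly}(n)$. Then $J\pm cE\in\PP_6(G)$, and convexity gives a cross-polytope of radius about $c/(n-1)^2$ around $J$. This suffices provided $c/(n-1)^2\ge n\cdot 16^{-1}n^{-9}$ after converting norms, which leaves ample room.

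Following the outline, the coordinates come from a spanning tree $T$ of $G$. Each edge $e$ of $T$ gives a cut $(X_e,Y_e)$, and the rank-one matrices
\[
 (\one_{X_e}-\tfrac{|X_e|}{n}\one)(\one_{X_f}-\tfrac{|X_f|}{n}\one)^{\tr},\qquad e,f\in T,
\]
form a basis of $\mathcal D$.

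A candidate strategy for producing the direction $\pm$ is as follows. Average the connected set $X_e$ (a side of a tree cut, which is connected in $T$), then average $V$, then $X_f$, and so on, ending and starting with near-complete averagings so that the product equals $J$ plus a rank-one correction. Concretely, $A_VA_{X}A_V=J$, whereas a product such as
\[
 A_{V\setminus\{u\}}\,A_{X}\,A_{V\setminus\{w\}}
\]
with $u,w$ non-cut vertices yields $J$ plus a rank-one term whose sign depends on whether $u,w$ lie in $X$ or in its complement. If $G$ is not a path, there are at least three non-cut vertices, or the tree can be chosen so that each cut side contains a non-cut vertex of $G$. Choosing $u,w$ on the same side or on opposite sides then flips the sign of the coefficient. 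Each such word has length at most $6$ once we include bracketing averagings, and this is the origin of the constant $6$.

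\textbf{Step 2: paths.} For paths, only the two endpoints are non-cut vertices, so the sign trick fails. Here I will instead use total nonnegativity. Products of interval averagings are totally nonnegative, and $QP$ is a TN matrix near $J$. I would show that the cone generated by length-$\le4$ interval words, such as $A_{[1,i]}A_VA_{[j,n]}$ and their analogues, contains every TN perturbation of $J$ in $\mathcal D$ of the relevant size. Positive cut coordinates come directly, and negative ones are excluded by TN, so they are not needed.

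\textbf{Step 3: norm bookkeeping.} Expand $D$ in the cut basis. The coefficients are bounded by $\mathrm{poly}(n)\normmax{D}$ because the basis matrix is unimodular-like, with inverse given by discrete second differences with entries $0,\pm1$. Compare this bound with $c$.

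\textbf{Main obstacle.} I expect the main obstacle to be Step 1 (and Step 2 for paths): constructing explicit length-$\le6$ words whose deviation from $J$ is exactly a signed rank-one cut matrix with controllable coefficient, and verifying that the mixed terms can be absorbed. I would first try $A_{V\setminus\{u\}}A_{X_e\cup\{\cdot\}}A_{V\setminus\{w\}}$ and compute it symbolically.
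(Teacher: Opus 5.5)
The non-path framework is the paper's: expand in the tree-cut basis $u_eu_f^{\tr}$, bound the coefficients by second differences, realize both signs of each direction, and finish with a cross-polytope around $J$. Your identity $QP-J=(Q-J)(P-J)$ is correct but not needed there. However, the constructions that carry the proof are missing, the word you propose to try does not work, and the path argument rests on a false claim.

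\textbf{Non-paths: wrong directions.} Since $A_{V\setminus\{u\}}=J+u_uu_u^{\tr}/s_u$ with $u_u=e_u-\frac1n\one$, your candidate is
\[
 A_{V\setminus\{u\}}A_XA_{V\setminus\{w\}}=J+\frac{(A_X)_{uw}-1/n}{s_us_w}\,u_uu_w^{\tr}.
\]
Its direction is fixed by the non-cut vertices $u,w$; the set $X$ only affects the scalar. Changing $u,w$ to flip the sign also changes the direction. When $G$ has only three non-cut vertices (a spider with three long legs, say), these directions span a space of dimension at most $9$, not $\mathcal L_V$.

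The missing tool is the two-block projection $E_S=A_SA_{V\setminus S}=J+u_Su_S^{\tr}/s_S$ for a tree cut. It acts on $\one^\perp$ as a rank-one projection, so $E_eYE_f-J=\frac{u_e^{\tr}Yu_f}{s_es_f}u_eu_f^{\tr}$ exactly, for any doubly stochastic $Y$. Take $Y=A_{V\setminus\{a\}}$ or $Y=A_{V\setminus\{a\}}A_{V\setminus\{b\}}$, with $a\ne b$ non-cut and on the same side of $S_f$. The second choice multiplies the coefficient by $\langle u_a,u_b\rangle\langle u_b,u_f\rangle/(s_b\langle u_a,u_f\rangle)=-1/(n-1)$. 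This gives exactly $J\pm c\,u_eu_f^{\tr}$, with no extra terms as your cross-polytope step requires, $|c|\ge n^{-3}$, and lengths $5$ and $6$.

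\textbf{Paths: TN does not exclude negative coordinates.} Total nonnegativity of a matrix near $J$ does not exclude negative cut coordinates, at any scale. For $n=4$ let $M=J+s(u_1u_2^{\tr}+u_2u_3^{\tr})-t\,u_2u_2^{\tr}$ with $0<s\le1/10$ and $0<t\le s^2/4$. Then:
\begin{enumerate}[label=(\roman*)]
\item $M$ is positive and doubly stochastic, its rows $3,4$ coincide, and its columns $1,2$ coincide.
\item Its consecutive $2\times2$ minors are $s/4+s^2/8$ at $(1,2)$ and $(2,3)$, $s^2/4$ at $(1,3)$, $s^2/16-t/4$ at $(2,2)$, and $0$ elsewhere. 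For positive matrices these control all $2\times2$ minors.
\item Every $3\times3$ minor is $0$ or $s^2/4$, and $\det M=0$.
\end{enumerate}
So $M$ is totally nonnegative with $\normmax{M-J}=O(s)$, yet $B(M)_{22}=-t<0$. Thus neither total nonnegativity of $QP$ nor the quadratic smallness of $QP-J$ gives $B(QP)\ge0$.

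The missing ingredient uses each factor separately. Write $B(QP)_{ij}=\langle d,e\rangle$, with $d$ a difference of consecutive rows of $Q$ and $e$ a difference of consecutive columns of $P$. Total nonnegativity and closeness to $J$ of each factor put $d$ and $e$ within small relative $\ell^1$-distance of the centered monotone cone $K_n$. On that cone, $\langle z,w\rangle\ge\frac1{4n}\normone z\normone w$. This forces $B(QP)\ge0$, and then $QP=(1-\Lambda)J+\sum\lambda_{pq}E_pE_q$.

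This, not a quadratic gain, is why two blocks are used; for non-paths one block suffices. Your sample word $A_{[1,i]}A_VA_{[j,n]}$ is simply $J$; the relevant four-step words are $E_pE_q$.
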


We postpone the proof to Sections~\ref{sec:coordinates}--\ref{sec:path} and
first derive the global bound.

\subsection{The uniform bound}

Define
\begin{equation}\label{eq:bound}
\begin{split}
 L_1&=0,\qquad L_2=1,\\
 r_n&=\lceil\log_2 n\rceil,\qquad q_n=4n^2r_n,\\
 L_n&=2q_n(L_{n-1}+1)\qquad(n\ge3).
\end{split}
\end{equation}
Since $q_n\ge1$, we have $L_n\ge2L_{n-1}$.

\begin{proof}[Proof of Theorem~\ref{thm:operators}, assuming
Proposition~\ref{prop:local-shortening}]
We argue by induction on $n$. The cases $n=1$ and $n=2$ are immediate; for
$n=2$, the only nontrivial operation averages both vertices and is
idempotent.

Fix $n\ge3$ and assume the theorem for all smaller graphs. Put
\[
 h=L_{n-1}+1,
 \qquad
 q=q_n,
 \qquad
 N=2qh=L_n.
\]
Let $\sigma$ be a strategy of length $k\ge N$, and partition its first $N$
operations into $2q$ consecutive windows of length $h$.

If one window is disconnected, Lemma~\ref{lem:disconnected-window}, with
$\ell_j=L_j$, replaces it by a convex combination of strategy operators of
length at most $h-1$. By \eqref{eq:replacement}, $A_\sigma$ is then a convex
combination of strategy operators of length at most $k-1$.

Suppose all $2q$ windows are connected. Split the prefix into two
chronological halves, with operators $P$ and $Q$, so that the prefix operator
is $QP$. Each half contains $q$ connected windows. Since
$r_n\ge\log_2 n\ge\log n$, Lemma~\ref{lem:mixing} gives
\[
 \normmax{P-J},\ \normmax{Q-J}
 \le e^{-2q/n^2}
 =e^{-8r_n}
 \le n^{-8}
 \le\frac1{4n^5}
 =\delta_n.
\]
Proposition~\ref{prop:local-shortening} replaces $QP$ by a convex combination
of strategy operators of length at most six. Retaining the remaining $k-N$
operations gives strategies of length at most
\[
 k-N+6<k.
\]
Thus every strategy of length at least $N$ is a finite convex combination of
strictly shorter strategies. Strong induction on the length gives
\[
 \PP(G)=\PP_{L_n}(G).
\]
\end{proof}

For $n\ge3$, $L_{n-1}\ge1$, and therefore
\[
 L_n
 =2q_n(L_{n-1}+1)
 \le16n^2r_nL_{n-1}.
\]
Iterating and using $r_j\le r_n$ for $j\le n$ yields
\begin{equation}\label{eq:factorial-bound}
 L_n
 \le16^{n-2}\left(\frac{n!}{2}\right)^2
          \lceil\log_2n\rceil^{n-2}
 =\exp\bigl(2n\log n+n\log\log n+O(n)\bigr).
\end{equation}
Hence $L_n\le n^{(2+o(1))n}$. The remainder of the proof is devoted to
Proposition~\ref{prop:local-shortening}.

\section{Cut coordinates and rank-one operators}\label{sec:coordinates}

We next construct short strategy operators whose deviations from $J$ span the
relevant affine space. Connected cuts give rank-one directions, and the cuts
of a spanning tree give a basis.

\subsection{Two-block projections}

Let \(\varnothing\ne S\subsetneq V\), and assume that both \(G[S]\) and
\(G[V\setminus S]\) are connected. Write \(\one_S\) for the indicator of
\(S\), and set
\[
 u_S=\one_S-\frac{|S|}{n}\one,
 \qquad
 s_S=\langle u_S,u_S\rangle=\frac{|S|(n-|S|)}n.
\]
Average separately on the two sides of the cut:
\[
 E_S=A_SA_{V\setminus S}.
\]
The two factors commute, and \(E_S\) is the orthogonal projection onto the
space of vectors that are constant on each side of the cut. Since this space
is spanned by the orthogonal vectors \(\one\) and \(u_S\),
\begin{equation}\label{eq:cut-projection}
 E_S=J+\frac{u_Su_S^{\tr}}{s_S}.
\end{equation}
Thus a two-block average differs from \(J\) in a single rank-one direction.
For two such cuts \(S,T\),
\begin{equation}\label{eq:cut-product}
 E_SE_T-J
 =\frac{\langle u_S,u_T\rangle}{s_Ss_T}\,u_Su_T^{\tr},
\end{equation}
where
\begin{equation}\label{eq:cut-inner-product}
 \langle u_S,u_T\rangle
 =|S\cap T|-\frac{|S||T|}{n}.
\end{equation}
The operator \(E_SE_T\) is represented by at most four averaging operations.

\subsection{Tree coordinates}

Fix a rooted spanning tree \(T\) of the connected graph \(G\), with every
edge oriented away from the root. For \(e\in E(T)\), let \(v_e\) be its child,
\(p_e\) its parent, and \(S_e\) the vertex set of the component of \(T-e\)
containing \(v_e\). Both sides of this cut induce connected subgraphs of
\(G\). Set
\[
 u_e=u_{S_e},\qquad s_e=s_{S_e},\qquad E_e=E_{S_e}.
\]

Let \(H\) be the \(n\times(n-1)\) matrix with columns \(u_e\), and define the
tree-difference matrix \(D\) by
\[
 (Dz)_e=z_{v_e}-z_{p_e}.
\]
We also write
\[
 \mathcal A_V
 =\{M\in\R^{V\times V}:M\one=\one,\ \one^{\tr}M=\one^{\tr}\},
\]
\[
 \mathcal L_V
 =\{X\in\R^{V\times V}:X\one=0,\ \one^{\tr}X=0\}.
\]
Thus \(\mathcal A_V=J+\mathcal L_V\).

\begin{lemma}\label{lem:tree-coordinates}
The matrices \(D,H\) satisfy
\[
 DH=I_{n-1},\qquad HD=I_n-J.
\]
Consequently, for every \(M\in\mathcal A_V\),
\begin{equation}\label{eq:reconstruct}
 M-J
 =H(DMD^{\tr})H^{\tr}
 =\sum_{e,f\in E(T)}(DMD^{\tr})_{ef}\,u_eu_f^{\tr}.
\end{equation}
Hence the matrices
\[
 \{u_eu_f^{\tr}:e,f\in E(T)\}
\]
form a basis of \(\mathcal L_V\).
\end{lemma}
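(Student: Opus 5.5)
The plan is to verify the two matrix identities directly from the definitions, and then obtain the reconstruction formula and the basis claim by pure linear algebra.

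\emph{Step 1: $DH=I_{n-1}$.} The $(e,f)$ entry of $DH$ is $(u_f)_{v_e}-(u_f)_{p_e}$. The constant part $-\frac{|S_f|}{n}\one$ cancels in this difference, so the entry equals $\one_{S_f}(v_e)-\one_{S_f}(p_e)$. Since $S_f$ is the subtree below $f$, it contains $v_e$ but not $p_e$ exactly when $e$ is the edge joining $S_f$ to its complement, that is, when $e=f$. For $e\ne f$, both endpoints of $e$ lie on the same side of the cut defined by $f$. Hence $DH=I_{n-1}$.

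\emph{Step 2: $HD=I_n-J$.} For $z\in\R^n$ we have
\[
 HDz=\sum_e (z_{v_e}-z_{p_e})\,u_e=\sum_e(z_{v_e}-z_{p_e})\one_{S_e}-c\one
\]
for some scalar $c$. Evaluate the first sum at a vertex $w$. It telescopes along the root-to-$w$ path, because $w\in S_e$ exactly for the edges $e$ on that path. The result is $z_w-z_{\mathrm{root}}$, so $HDz=z-c'\one$ for some scalar $c'$. Each $u_e$ lies in $\one^\perp$, so $HDz\in\one^\perp$, which forces $c'=\bar z$. Thus $HD=I-J$.

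\emph{Step 3: reconstruction and basis.} Take $M\in\mathcal A_V$. Then $(I-J)M(I-J)=M-J-J+J=M-J$, using $MJ=JM=J$. Substituting $I-J=HD$ on the left and $I-J=(HD)^{\tr}=D^{\tr}H^{\tr}$ on the right gives \eqref{eq:reconstruct}; note that $I-J$ is symmetric. Writing out the product $H(DMD^{\tr})H^{\tr}$ column by column gives the sum over $e,f$.

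For the basis claim, every $X\in\mathcal L_V$ satisfies $(I-J)X(I-J)=X$. The same argument therefore shows that the $(n-1)^2$ matrices $u_eu_f^{\tr}$ span $\mathcal L_V$. They lie in $\mathcal L_V$ because each $u_e\perp\one$. For independence, suppose $\sum c_{ef}u_eu_f^{\tr}=HCH^{\tr}=0$. Multiplying on the left by $D$ and on the right by $D^{\tr}$ and using $DH=I$ gives $C=0$. (Alternatively, $\dim\mathcal L_V=(n-1)^2$.)

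No step is a real obstacle. The only point needing care is the cut bookkeeping in Steps 1 and 2: confirming that $w\in S_e$ exactly when $e$ lies on the root-to-$w$ path, and tracking the sign convention of $D$ (child minus parent) so that the telescoping sum produces $z_w-z_{\mathrm{root}}$ rather than its negative.
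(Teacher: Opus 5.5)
Your proof is correct and follows the paper's argument essentially step by step. The one small difference is in obtaining $HD=I-J$. You telescope explicitly along root-to-vertex paths, whereas the paper deduces it from $D(z-HDz)=0$ (a consequence of $DH=I$) together with the fact that $\ker D$ consists of the constant vectors on the connected tree.
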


\begin{proof}
For tree edges \(e,f\), the indicator \(\one_{S_f}\) changes from parent to
child across \(e\) exactly when \(e=f\). Since the centering term in \(u_f\)
is constant,
\[
 (Du_f)_e=\delta_{ef}.
\]
Thus \(DH=I_{n-1}\). The columns of \(H\) are therefore linearly independent;
since they are centered, they form a basis of \(\one^\perp\).

For \(z\in\R^V\),
\[
 D(z-HDz)=0.
\]
Hence \(z-HDz\) is constant on the tree. Since \(HDz\in\one^\perp\), this
constant is the mean of the coordinates of \(z\), and therefore
\[
 HDz=z-\frac{\one^{\tr}z}{n}\one=(I-J)z.
\]
Thus \(HD=I-J\).

If \(M\in\mathcal A_V\), then \(MJ=JM=J\). Hence
\[
 H(DMD^{\tr})H^{\tr}
 =(I-J)M(I-J)
 =M-J,
\]
which proves \eqref{eq:reconstruct}. Finally, the map
\[
 B\longmapsto HBH^{\tr}
\]
is injective because
\[
 D(HBH^{\tr})D^{\tr}=B,
\]
and is surjective onto \(\mathcal L_V\) by \eqref{eq:reconstruct}. The stated
rank-one matrices therefore form a basis of \(\mathcal L_V\).
\end{proof}

The matrix \(DMD^{\tr}\) gives the cut coordinates of \(M-J\). It remains
to realize both signs of these coordinates by short strategy operators; the
argument depends on whether \(G\) is a path.

\section{Graphs other than paths: sign reversal}\label{sec:nonpath}

Section~\ref{sec:coordinates} realizes one nonzero multiple of each basis
direction \(u_eu_f^{\tr}\). If \(G\) is not a path, three non-cut vertices
provide the opposite sign.

\subsection{A sign-reversal identity}

A vertex \(a\in V(G)\) is \emph{non-cut} if
\(G[V\setminus\{a\}]\) is connected. For such a vertex, the singleton cut
\(\{a\},V\setminus\{a\}\) gives
\[
 u_a=e_a-\frac1n\one,
 \qquad
 s_a=\frac{n-1}{n},
 \qquad
 E_a=A_{V\setminus\{a\}}.
\]
Thus \(E_a\) is a one-operation strategy operator. For distinct vertices
\(a,b\),
\begin{equation}\label{eq:singleton-pairing}
 \langle u_a,u_b\rangle=-\frac1n.
\end{equation}

\begin{lemma}\label{lem:three-vertices}
Every finite connected graph that is not a path has at least three distinct
non-cut vertices.
\end{lemma}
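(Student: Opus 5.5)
We argue through spanning trees, using the standard fact that every leaf of a spanning tree of $G$ is a non-cut vertex of $G$. Indeed, if $\ell$ is a leaf of a spanning tree $T$, then $T-\ell$ is a spanning tree of $G[V\setminus\{\ell\}]$, so that induced subgraph is connected. It therefore suffices to find a spanning tree with at least three leaves. The boundary cases need a word first. If $n\le 2$, every connected graph is a path, so we may assume $n\ge3$. (For $n=1$, the single vertex is a path, and removing it leaves the empty graph; we simply exclude this case.)

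First, suppose $G$ has a vertex $v$ of degree at least $3$. Start from any spanning tree. It is cleaner to build one by breadth-first search from $v$: all neighbours of $v$ are then children of $v$, so $v$ has degree at least $3$ in the tree. A tree with a vertex of degree $\Delta$ has at least $\Delta$ leaves. This follows either from the handshake count $\sum(\deg-2)=-2$, or by following $\Delta$ edge-disjoint paths from $v$ until each ends at a leaf. So this tree has at least three leaves, and hence $G$ has at least three non-cut vertices.

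Otherwise $G$ has maximum degree at most $2$. Since $G$ is connected, it is then a path or a cycle, and by hypothesis it is not a path. Thus $G$ is a cycle $C_n$ with $n\ge3$. Deleting any vertex of a cycle leaves a path, which is connected, so all $n\ge3$ vertices are non-cut.

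The only point that needs care is the leaf-counting claim. The handshake identity $\sum_v(\deg_T v-2)=2(n-1)-2n=-2$ gives $\#\{\text{leaves}\}=2+\sum_{\deg\ge3}(\deg-2)\ge \Delta$, which is exactly what is needed. The rest is routine.
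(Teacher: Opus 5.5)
Your proof is correct and follows the paper's argument: the maximum-degree-at-most-two case leaves only a cycle, in which every vertex is non-cut, and otherwise a spanning tree that keeps three edges at a vertex of degree $\ge 3$ has at least three leaves, each of which is non-cut. Your BFS construction and handshake count just spell out the steps the paper states briefly.
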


\begin{proof}
If every vertex has degree at most two, then the graph is a path or a cycle;
in the latter case every vertex is non-cut. Otherwise, choose three edges
incident with a vertex of degree at least three and extend them to a spanning
tree \(T\). The tree \(T\) has at least three leaves. If \(a\) is a leaf,
then \(T-a\) is a connected spanning subgraph of \(G-a\), so \(a\) is
non-cut.
\end{proof}

Fix a tree edge \(f\). Among three non-cut vertices, two lie on the same side
of the cut \(S_f\). For such vertices \(a,b\),
\begin{equation}\label{eq:same-side-pairing}
 \langle u_a,u_f\rangle=\langle u_b,u_f\rangle.
\end{equation}
Indeed, both sides equal \(1-|S_f|/n\) if \(a,b\in S_f\), and
\(-|S_f|/n\) otherwise.

\begin{lemma}\label{lem:same-side-sign-reversal}
Let \(e,f\) be tree edges, and let \(a,b\) be distinct non-cut vertices on
the same side of \(S_f\). Define
\[
 U_{ef}=E_eE_aE_f,
 \qquad
 V_{ef}=E_eE_aE_bE_f.
\]
Then
\begin{equation}\label{eq:two-signs}
 U_{ef}-J=c_{ef}u_eu_f^{\tr},
 \qquad
 V_{ef}-J=-\frac{c_{ef}}{n-1}u_eu_f^{\tr},
\end{equation}
where
\begin{equation}\label{eq:cef}
 c_{ef}
 =\frac{\langle u_e,u_a\rangle\langle u_a,u_f\rangle}
        {s_es_as_f}.
\end{equation}
Moreover,
\begin{equation}\label{eq:cef-lower}
 |c_{ef}|\ge\frac{n}{(n-1)^3},
 \qquad
 \frac{|c_{ef}|}{n-1}\ge\frac{n}{(n-1)^4}\ge n^{-3}.
\end{equation}
The operators \(U_{ef}\) and \(V_{ef}\) use at most five and six averaging
operations, respectively.
\end{lemma}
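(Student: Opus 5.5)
The plan is to compute both products by repeatedly using the rank-one form \eqref{eq:cut-projection}. Each factor is a two-block projection \(E_S=J+u_Su_S^{\tr}/s_S\), and \(J\) absorbs everything because \(Ju_S=0\) and \(u_S^{\tr}J=0\). So a product of such projections equals \(J\) plus a single chained rank-one term. Extending \eqref{eq:cut-product} by induction, for cuts \(S_1,\ldots,S_m\),
\[
 E_{S_1}\cdots E_{S_m}-J=\frac{\prod_{i=1}^{m-1}\langle u_{S_i},u_{S_{i+1}}\rangle}{\prod_{i=1}^m s_{S_i}}\,u_{S_1}u_{S_m}^{\tr}.
\]
For \(m=3\) with \(S_2=\{a\}\), this is exactly \(U_{ef}-J=c_{ef}u_eu_f^{\tr}\), with \(c_{ef}\) as in \eqref{eq:cef}.

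For \(V_{ef}\), the same formula with \(m=4\) gives the coefficient
\[
 \frac{\langle u_e,u_a\rangle\langle u_a,u_b\rangle\langle u_b,u_f\rangle}{s_es_as_bs_f}.
\]
We simplify it in three steps:
\begin{itemize}[nosep]
\item By \eqref{eq:same-side-pairing}, replace \(\langle u_b,u_f\rangle\) with \(\langle u_a,u_f\rangle\).
\item By \eqref{eq:singleton-pairing}, \(\langle u_a,u_b\rangle=-1/n\).
\item Use \(s_b=(n-1)/n\).
\end{itemize}
Together these multiply \(c_{ef}\) by \((-1/n)\cdot n/(n-1)=-1/(n-1)\), which gives \eqref{eq:two-signs}.

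For the operation count, \(E_a=A_{V\setminus\{a\}}\) and \(E_b\) are single averagings, since the singleton factors are trivial. Each tree-cut projection \(E_e\) and \(E_f\) uses at most two averagings. So \(U_{ef}\) uses at most five operations and \(V_{ef}\) at most six. This is valid because both sides of each cut induce connected subgraphs: for tree cuts by construction, and for singletons because \(a,b\) are non-cut.

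The lower bound \eqref{eq:cef-lower} is the only step requiring care. By \eqref{eq:cut-inner-product}, \(\langle u_e,u_a\rangle\) equals \(1-|S_e|/n\) or \(-|S_e|/n\), depending on whether \(a\in S_e\). Since \(1\le|S_e|\le n-1\), its absolute value is at least \(1/n\), and the same holds for \(\langle u_a,u_f\rangle\). For the denominators, \(s_e,s_f\le\) the maximum of \(k(n-k)/n\), but a uniform bound is cleaner.

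A sharper way to handle each factor is to pair it with its own \(s\). Write \(k=|S_e|\). Then \(|\langle u_e,u_a\rangle|/s_e\) equals either \((n-k)/n\cdot n/(k(n-k))=1/k\) or \(k/n\cdot n/(k(n-k))=1/(n-k)\). In either case it is at least \(1/(n-1)\), and the same holds for the \(f\) factor. Combined with \(1/s_a=n/(n-1)\), this gives
\[
 |c_{ef}|\ge\frac{1}{(n-1)^2}\cdot\frac{n}{n-1}=\frac{n}{(n-1)^3}.
\]
Dividing by \(n-1\) gives \(n/(n-1)^4\), and this is at least \(n^{-3}\) because \(n^4\ge(n-1)^4\). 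This settles \eqref{eq:cef-lower} without any case analysis on sign.
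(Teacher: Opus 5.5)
Your proposal is correct and follows essentially the same route as the paper: you chain the rank-one forms $E_S=J+u_Su_S^{\tr}/s_S$ to get the coefficient of $u_eu_f^{\tr}$, and you obtain the factor $-1/(n-1)$ from the singleton pairing $\langle u_a,u_b\rangle=-1/n$, from $s_b=(n-1)/n$, and from the same-side identity. You also get the lower bound from the per-factor ratios $|\langle u_e,u_a\rangle|/s_e\in\{1/|S_e|,\,1/(n-|S_e|)\}$. The only cosmetic difference is that you substitute $\langle u_b,u_f\rangle=\langle u_a,u_f\rangle$ directly instead of dividing by $\langle u_a,u_f\rangle$ as the paper does.
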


\begin{proof}
On \(\one^\perp\), the two-block projection \(E_x\) acts as
\(u_xu_x^{\tr}/s_x\). Hence
\[
 U_{ef}-J
 =\frac{\langle u_e,u_a\rangle\langle u_a,u_f\rangle}
        {s_es_as_f}\,u_eu_f^{\tr}.
\]
Inserting \(E_b\) multiplies the coefficient by
\[
 \frac{\langle u_a,u_b\rangle}{s_b}
 \frac{\langle u_b,u_f\rangle}{\langle u_a,u_f\rangle}
 =-\frac1{n-1},
\]
by \eqref{eq:singleton-pairing} and \eqref{eq:same-side-pairing}. This proves
\eqref{eq:two-signs}.

For any tree edge \(e\),
\begin{equation}\label{eq:tree-singleton-ratio}
 \frac{\langle u_e,u_a\rangle}{s_e}
 =\begin{cases}
   |S_e|^{-1},&a\in S_e,\\
   -(n-|S_e|)^{-1},&a\notin S_e.
  \end{cases}
\end{equation}
The same formula holds with \(e\) replaced by \(f\). Since
\(s_a=(n-1)/n\), equations \eqref{eq:cef} and
\eqref{eq:tree-singleton-ratio} give
\[
 |c_{ef}|\ge\frac{n}{(n-1)^3}.
\]
The remaining inequalities in \eqref{eq:cef-lower} follow immediately.
Finally, \(E_e\) and \(E_f\) each use at most two operations, while
\(E_a\) and \(E_b\) use one each.
\end{proof}

\subsection{A neighborhood of complete averaging}

We apply the two signs coordinatewise.

\begin{proposition}\label{prop:nonpath-neighborhood}
Let \(G\) be a connected graph on \(n\) vertices that is not a path. If
\(M\in\mathcal A_V\) and
\[
 \normmax{M-J}\le\frac1{4n^5},
\]
then \(M\in\PP_6(G)\). Hence \(J\) is an interior point of
\(\PP_6(G)\) relative to \(\mathcal A_V\).
\end{proposition}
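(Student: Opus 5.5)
Write $X=M-J\in\mathcal L_V$ and expand it in the tree basis of Lemma~\ref{lem:tree-coordinates}:
\[
 X=\sum_{e,f\in E(T)}b_{ef}\,u_eu_f^{\tr},\qquad b=DMD^{\tr}=DXD^{\tr}.
\]
Since each row of $D$ has exactly one entry $+1$ and one entry $-1$, every entry of $DXD^{\tr}$ is a signed sum of four entries of $X$. Hence
\[
 |b_{ef}|\le4\normmax{X}\le n^{-5}.
\]
For each pair $(e,f)$, fix two non-cut vertices $a,b$ on the same side of $S_f$; they exist by Lemma~\ref{lem:three-vertices}. Lemma~\ref{lem:same-side-sign-reversal} then supplies strategy operators $U_{ef},V_{ef}\in\Acal_6(G)$ with
\[
 U_{ef}-J=c_{ef}u_eu_f^{\tr},\qquad V_{ef}-J=-\tfrac{c_{ef}}{n-1}u_eu_f^{\tr}.
\]
These two coefficients have opposite signs, and both have absolute value at least $n^{-3}$ by \eqref{eq:cef-lower}. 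Also $J=A_V\in\Acal_1(G)$, since $G$ is connected.

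Choose $W_{ef}\in\{U_{ef},V_{ef}\}$ so that $W_{ef}-J=\gamma_{ef}u_eu_f^{\tr}$, where $\gamma_{ef}$ has the sign of $b_{ef}$ and $|\gamma_{ef}|\ge n^{-3}$. If $b_{ef}=0$, either choice works. Put $\mu_{ef}=b_{ef}/\gamma_{ef}\ge0$, so that $\mu_{ef}\le n^{-5}/n^{-3}=n^{-2}$. There are $(n-1)^2$ pairs, so
\[
 \sum_{e,f}\mu_{ef}\le\frac{(n-1)^2}{n^2}<1.
\]
Therefore
\[
 M=\Bigl(1-\sum_{e,f}\mu_{ef}\Bigr)J+\sum_{e,f}\mu_{ef}W_{ef}.
\]
To check this, subtract $J$ from both sides. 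The right-hand side becomes $\sum\mu_{ef}\gamma_{ef}u_eu_f^{\tr}=\sum b_{ef}u_eu_f^{\tr}=X$, using that the weights sum to one. The weights are nonnegative and every operator used lies in $\Acal_6(G)$, so $M\in\PP_6(G)$.

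For the interior-point claim, the set of $M\in\mathcal A_V$ with $\normmax{M-J}\le 1/(4n^5)$ is a neighbourhood of $J$ in the affine space $\mathcal A_V$. We have just shown this neighbourhood is contained in $\PP_6(G)$, so $J$ is a relative interior point.

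The main obstacle is the coefficient estimate, namely bounding the cut coordinates $b_{ef}$ by $4\normmax{X}$ and keeping $|\gamma_{ef}|\ge n^{-3}$ uniformly in $(e,f)$. This is already provided by \eqref{eq:cef-lower} and the structure of $D$. The only remaining care is that the non-cut pair $a,b$ depends on $f$, and Lemma~\ref{lem:same-side-sign-reversal} allows exactly this choice.
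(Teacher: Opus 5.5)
Your proof is correct and follows the paper's argument essentially step for step. Like the paper, you expand $M-J$ in the tree basis, bound each cut coordinate by $4\normmax{M-J}$, pick the sign-matched operator from $\{U_{ef},V_{ef}\}$ with coefficient magnitude at least $n^{-3}$, and put the leftover weight on $J=A_V$. Your relative-interior argument, that the whole max-norm ball about $J$ in $\mathcal A_V$ lies in $\PP_6(G)$, is direct and sufficient; the paper's extra remark that this ball consists of doubly stochastic matrices is not needed.
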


\begin{proof}
Choose three distinct non-cut vertices by Lemma~\ref{lem:three-vertices}.
For each tree edge \(f\), choose two of them, denoted \(a_f,b_f\), on the
same side of \(S_f\). Lemma~\ref{lem:same-side-sign-reversal} then provides,
for every ordered pair \(e,f\), two strategy operators in the direction
\(u_eu_f^{\tr}\) with opposite signs and coefficient magnitudes at least
\(n^{-3}\).

Set
\[
 B=D(M-J)D^{\tr}.
\]
Each entry \(B_{ef}\) is a signed sum of four entries of \(M-J\), so
\begin{equation}\label{eq:B-bound-nonpath}
 |B_{ef}|\le4\normmax{M-J}.
\end{equation}
For each \(e,f\), choose \(R_{ef}\in\{U_{ef},V_{ef}\}\) so that
\[
 R_{ef}-J=d_{ef}u_eu_f^{\tr},
 \qquad
 B_{ef}d_{ef}\ge0,
 \qquad
 |d_{ef}|\ge n^{-3},
\]
and put \(\lambda_{ef}=B_{ef}/d_{ef}\). If \(B_{ef}=0\), set
\(\lambda_{ef}=0\). Then
\[
 \sum_{e,f}\lambda_{ef}
 \le n^3\sum_{e,f}|B_{ef}|
 \le4n^3(n-1)^2\normmax{M-J}
 \le1.
\]
Using \eqref{eq:reconstruct},
\[
 M-J
 =\sum_{e,f}B_{ef}u_eu_f^{\tr}
 =\sum_{e,f}\lambda_{ef}(R_{ef}-J).
\]
Therefore
\[
 M
 =\left(1-\sum_{e,f}\lambda_{ef}\right)J
  +\sum_{e,f}\lambda_{ef}R_{ef}.
\]
Since \(J=A_V\) uses one operation and every \(R_{ef}\) uses at most six,
this is a convex combination in \(\PP_6(G)\).

Finally, \(1/(4n^5)<1/n\), so the max-norm ball of radius \(1/(4n^5)\)
about \(J\) in \(\mathcal A_V\) consists of entrywise positive, hence doubly
stochastic, matrices. This proves the relative interior statement.
\end{proof}

For paths, this sign-reversal argument is unavailable; we use total
nonnegativity instead.

\section{Paths and total nonnegativity}\label{sec:path}

A path has only two non-cut vertices, so the preceding sign-reversal argument
does not apply. Instead, we use a one-sided cone of rank-one directions and
show that the product of two sufficiently mixed path strategy operators lies
in this cone.

\subsection{Prefix cuts and mixed differences}

Label the vertices of $P_n$ by $1,\ldots,n$ in path order, where $n\ge2$.
For $1\le p<n$, set
\[
 u_p=\one_{[1,p]}-\frac pn\one,
 \qquad
 s_p=\frac{p(n-p)}n,
 \qquad
 E_p=A_{[1,p]}A_{[p+1,n]}.
\]
Let $H$ have columns $u_p$, and let $D$ be the path-difference matrix
\[
 (Dz)_p=z_p-z_{p+1}.
\]
These are the tree coordinates of Section~\ref{sec:coordinates}, rooted at
$n$. For a doubly stochastic matrix $M$, write
\[
 B(M)=DMD^{\tr}.
\]
Thus
\[
 B(M)_{pq}
 =M_{pq}-M_{p+1,q}-M_{p,q+1}+M_{p+1,q+1}.
\]
We call $B(M)$ the \emph{mixed-difference matrix}.

The prefix cuts are nested. If $p\le q$, then
\begin{equation}\label{eq:prefix-inner}
 \langle u_p,u_q\rangle=\frac{p(n-q)}n>0,
 \qquad
 \frac{s_ps_q}{\langle u_p,u_q\rangle}
 =\frac{q(n-p)}n\le n,
\end{equation}
and the same bound holds after interchanging $p$ and $q$. Hence
$E_pE_q-J$ is a positive multiple of $u_pu_q^{\tr}$. This makes entrywise
nonnegativity of $B(M)$ exactly the sign condition needed for a convex
decomposition into short path strategies.

\begin{lemma}\label{lem:path-convex}
Let $M$ be doubly stochastic. If $B(M)$ is entrywise nonnegative and
\[
 \normmax{M-J}\le\frac1{4n},
\]
then $M\in\PP_4(P_n)$. More precisely,
\begin{equation}\label{eq:path-convex}
 M=(1-\Lambda)J+
 \sum_{p,q=1}^{n-1}\lambda_{pq}E_pE_q,
 \qquad
 \lambda_{pq}
 =B(M)_{pq}\frac{s_ps_q}{\langle u_p,u_q\rangle},
\end{equation}
where
\[
 \Lambda:=\sum_{p,q=1}^{n-1}\lambda_{pq}
 \le4n\normmax{M-J}\le1.
\]
\end{lemma}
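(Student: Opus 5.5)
The plan is to verify the displayed identity directly using the reconstruction formula of Lemma~\ref{lem:tree-coordinates}, and then bound $\Lambda$ entrywise.

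\emph{Step 1: the identity.} Since $M$ is doubly stochastic, $M\in\mathcal A_V$. By \eqref{eq:reconstruct} with the path tree rooted at $n$,
\[
 M-J=\sum_{p,q}B(M)_{pq}\,u_pu_q^{\tr}.
\]
By \eqref{eq:cut-product} and \eqref{eq:prefix-inner}, each product satisfies
\[
 E_pE_q-J=\frac{\langle u_p,u_q\rangle}{s_ps_q}\,u_pu_q^{\tr},
\]
and the coefficient is strictly positive. Hence $B(M)_{pq}u_pu_q^{\tr}=\lambda_{pq}(E_pE_q-J)$ with $\lambda_{pq}$ as in \eqref{eq:path-convex}. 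Summing over $p,q$ gives $M=(1-\Lambda)J+\sum\lambda_{pq}E_pE_q$. The hypothesis $B(M)\ge0$ makes every $\lambda_{pq}\ge0$.

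\emph{Step 2: bounding $\Lambda$.} Using the bound $s_ps_q/\langle u_p,u_q\rangle\le n$ from \eqref{eq:prefix-inner}, we get $\Lambda\le n\sum_{p,q}B(M)_{pq}$. Since all entries of $B(M)$ are nonnegative, this sum telescopes instead of being bounded crudely entry by entry:
\[
 \sum_{p,q=1}^{n-1}B(M)_{pq}=M_{11}-M_{n1}-M_{1n}+M_{nn}.
\]
Replacing each entry by its deviation from $1/n$, the $J$ contributions cancel, so the sum is at most $4\normmax{M-J}$. Therefore $\Lambda\le 4n\normmax{M-J}\le 1$.

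\emph{Step 3: membership.} $J=A_V$ is a one-operation strategy operator, and each $E_pE_q$ uses at most four averaging operations, namely $A_{[1,p]},A_{[p+1,n]},A_{[1,q]},A_{[q+1,n]}$. Each of these sets is a connected interval. So $M$ is a convex combination of elements of $\Acal_4(P_n)$, that is, $M\in\PP_4(P_n)$.

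The only point needing care is Step 2. The naive bound would give $\Lambda\le 4n(n-1)^2\normmax{M-J}$, which is too weak. The telescoping identity, valid precisely because $B(M)\ge0$ so that absolute values can be dropped, is what yields the linear factor $4n$. Everything else is direct substitution into earlier formulas.
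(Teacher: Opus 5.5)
Your proposal is correct and follows the paper's proof essentially step for step: you get the identity from the reconstruction formula combined with the cut-product identity, you bound $\Lambda$ by $n\sum_{p,q}B(M)_{pq}$ using $s_ps_q/\langle u_p,u_q\rangle\le n$ and $B(M)\ge0$, and you finish by telescoping to $M_{11}-M_{1n}-M_{n1}+M_{nn}$. One small wording point: the telescoping identity holds for every $M$, and what nonnegativity of $B(M)$ actually supplies is $\lambda_{pq}\ge0$ and the termwise bound $\lambda_{pq}\le nB(M)_{pq}$.
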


\begin{proof}
By \eqref{eq:reconstruct},
\[
 M-J=\sum_{p,q=1}^{n-1}B(M)_{pq}\,u_pu_q^{\tr}.
\]
Combining this with \eqref{eq:cut-product} gives
\eqref{eq:path-convex}. The coefficients $\lambda_{pq}$ are nonnegative by
hypothesis and \eqref{eq:prefix-inner}. Moreover,
\[
 \Lambda
 \le n\sum_{p,q=1}^{n-1}B(M)_{pq}.
\]
The double sum telescopes:
\[
 \sum_{p,q=1}^{n-1}B(M)_{pq}
 =M_{11}-M_{1n}-M_{n1}+M_{nn}.
\]
Since the left-hand side is nonnegative,
\[
 0\le\sum_{p,q}B(M)_{pq}
 \le4\normmax{M-J}.
\]
Thus $\Lambda\le1$, and \eqref{eq:path-convex} is a convex combination.
Finally, $J=A_{V(P_n)}$ uses one averaging operation and each $E_pE_q$ uses
at most four.
\end{proof}

\subsection{Total nonnegativity}

A matrix is \emph{totally nonnegative} if all its minors are nonnegative.

\begin{lemma}\label{lem:TN}
Every strategy operator on $P_n$ is totally nonnegative.
\end{lemma}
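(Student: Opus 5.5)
Since products of totally nonnegative matrices are totally nonnegative (Cauchy--Binet), and every strategy operator on \(P_n\) is a product \(A_{S_k}\cdots A_{S_1}\) (the empty strategy giving \(I\), which is trivially totally nonnegative), it suffices to prove that each single averaging operator \(A_S\) on the path is totally nonnegative. A connected set \(S\) in \(P_n\) is an interval \([i,j]\) of consecutive vertices in path order. So \(A_S\) is block diagonal, in the natural vertex order, with blocks that are either \(1\times1\) identity entries or the \(s\times s\) block \(\frac1s\one\one^{\tr}\) occupying consecutive rows and columns \(i,\ldots,j\).

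The key step is to verify total nonnegativity of such a block-diagonal matrix directly. I would argue as follows. Take a minor with row set \(r_1<\cdots<r_m\) and column set \(c_1<\cdots<c_m\). If two rows both lie in the interval \([i,j]\), those two rows of \(A_S\) are identical. They therefore remain identical after restriction to any column set, so the minor vanishes; the same holds for two columns in \([i,j]\). Otherwise at most one row and at most one column lie in \([i,j]\). The submatrix is then the restriction of a matrix whose nonzero entries form a \emph{monotone} partial matching: row \(r\notin[i,j]\) has its only nonzero entry (equal to \(1\)) in column \(r\), and the single row in \([i,j]\), if any, has positive entries only in columns of \([i,j]\), of which at most one is selected. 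A square submatrix with at most one nonzero entry per row and per column has determinant \(0\) unless it is a permutation pattern with positive entries. Since the row-to-column correspondence is order-preserving, this permutation is the identity, and the determinant is a product of positive entries. The order-preserving claim is the one point needing care. Row \(r\notin[i,j]\) maps to column \(r\), and the unique row in \([i,j]\) maps to the unique column in \([i,j]\). Because \([i,j]\) is an interval, the relative order with all other indices is preserved: an index \(r<i\) precedes both, and \(r>j\) follows both.

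Alternatively, and as a cross-check, one can note that the block \(\frac1s\one\one^{\tr}\) has rank one with positive entries, so every minor of order \(\ge2\) drawn from it vanishes. Total nonnegativity of a direct sum of totally nonnegative blocks placed along consecutive diagonal positions then follows from the same order argument. I expect the only delicate point is making this ``interval \(\Rightarrow\) order-preserving'' observation rigorous. This is exactly where the path structure is used: on a general graph the averaged set need not be consecutive in any fixed ordering, and the corresponding minors can be negative. The rest is the standard Cauchy--Binet closure.
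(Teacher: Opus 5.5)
Your proof is correct and follows the same route as the paper. In both, Cauchy--Binet closure reduces the claim to a single interval-averaging operator, which is block diagonal in path order with identity blocks and one positive rank-one block. Your explicit minor analysis fills in the step the paper only asserts: repeated rows or columns inside the interval force the minor to vanish, and otherwise the nonzero pattern is an order-preserving partial permutation with positive entries.
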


\begin{proof}
Every connected subset of a path is an interval. Its averaging matrix is
block diagonal in path order, with identity blocks and one constant positive
rank-one block, and is therefore totally nonnegative. Products of totally
nonnegative matrices remain totally nonnegative by the Cauchy--Binet formula.
\end{proof}

Let $P,Q$ be positive doubly stochastic matrices. For $1\le i,j<n$, define
\[
 d_k=P_{ik}-P_{i+1,k},
 \qquad
 e_k=Q_{kj}-Q_{k,j+1}.
\]
Then
\begin{equation}\label{eq:mixed-product}
 B(PQ)_{ij}
 =\sum_{k=1}^n d_ke_k
 =\langle d,e\rangle.
\end{equation}
If $P$ and $Q$ are totally nonnegative, their $2\times2$ minors imply, for
$k<\ell$,
\[
 \frac{P_{ik}}{P_{i+1,k}}
 \ge
 \frac{P_{i\ell}}{P_{i+1,\ell}},
 \qquad
 \frac{Q_{kj}}{Q_{k,j+1}}
 \ge
 \frac{Q_{\ell j}}{Q_{\ell,j+1}}.
\]
Thus the relevant row and column ratios are monotone. Near $J$, the
difference vectors $d$ and $e$ are therefore close to the centered monotone
cone.

\subsection{The centered monotone cone}

Set
\[
 K_n=\{z\in\R^n:z_1\ge\cdots\ge z_n,\ \one^{\tr}z=0\}.
\]

\begin{lemma}\label{lem:relative}
Let $a,b$ be strictly positive probability vectors such that $a_j/b_j$ is
nonincreasing in $j$. Suppose
\[
 |nb_j-1|\le\eta<1
 \qquad(1\le j\le n).
\]
If $d=a-b$, then there exists $z\in K_n$ such that
\[
 \normone{d-z}
 \le\frac{2\eta}{1-\eta}\normone d.
\]
\end{lemma}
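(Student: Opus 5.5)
We need to construct $z\in K_n$ close to $d$. Write $r_j=a_j/b_j$, which is nonincreasing in $j$, so that $d_j=b_j(r_j-1)$. The natural candidate replaces the weights $b_j$ by the uniform weight $1/n$. Set
\[
 w_j=\frac{r_j-1}{n},\qquad z=w-\bar w\,\one,\qquad \bar w=\frac1n\sum_j w_j.
\]
Then $w$ is nonincreasing, and so is $z$. Moreover $\one^{\tr}z=0$, so $z\in K_n$. It remains to bound $\normone{d-z}\le\normone{d-w}+n|\bar w|$.

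For the first term, observe that
\[
 d_j-w_j=(r_j-1)\Bigl(b_j-\frac1n\Bigr)=d_j\,\frac{nb_j-1}{nb_j}.
\]
Since $|nb_j-1|\le\eta$ and $nb_j\ge1-\eta$, this gives $|d_j-w_j|\le\frac{\eta}{1-\eta}|d_j|$. Summing over $j$ yields $\normone{d-w}\le\frac{\eta}{1-\eta}\normone d$.

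For the centering term, use that $a$ and $b$ are both probability vectors, so $\one^{\tr}d=0$. Hence
\[
 n|\bar w|=|\one^{\tr}w|=|\one^{\tr}(w-d)|\le\normone{w-d}\le\frac{\eta}{1-\eta}\normone d.
\]
Adding the two bounds gives $\normone{d-z}\le\frac{2\eta}{1-\eta}\normone d$, which is exactly the claimed constant.

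The only point needing care is the exact identity $d_j-w_j=d_j(nb_j-1)/(nb_j)$, which rewrites the error relative to $d_j$ itself rather than relative to $r_j-1$. Once that identity is in hand, the argument is immediate. Monotonicity of $z$ uses only that $a_j/b_j$ is nonincreasing, and the hypothesis $\eta<1$ guarantees that each $b_j$ is bounded below by $(1-\eta)/n$.
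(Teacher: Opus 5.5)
Your proposal is correct and follows essentially the same route as the paper: your $z=w-\bar w\,\one$ is exactly the paper's $z=\frac1n(t-\bar t\one)$ with $t_j=a_j/b_j-1$, and both error terms are controlled the same way, using $\one^{\tr}d=0$ for the centering term and $nb_j\ge1-\eta$ for the pointwise term. The only cosmetic difference is that you bound $|d_j-w_j|$ directly relative to $|d_j|$, whereas the paper first bounds both terms by $\frac{\eta}{n}\sum_j|t_j|$ and then converts using $\sum_j|t_j|\le\frac{n}{1-\eta}\normone d$.
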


\begin{proof}
Set
\[
 t_j=\frac{a_j}{b_j}-1,
 \qquad
 \bar t=\frac1n\sum_{j=1}^n t_j,
 \qquad
 z=\frac1n(t-\bar t\one).
\]
Since $(t_j)$ is nonincreasing, $z\in K_n$. Also,
$\sum_jb_jt_j=\sum_jd_j=0$, so
\[
 |\bar t|
 =\left|\sum_j(1/n-b_j)t_j\right|
 \le\frac\eta n\sum_j|t_j|.
\]
Since $b_j\ge(1-\eta)/n$,
\[
 \sum_j|t_j|
 \le\frac n{1-\eta}\normone d.
\]
Finally,
\[
 d_j-z_j=(b_j-1/n)t_j+\frac{\bar t}{n},
\]
and summing absolute values gives the stated estimate.
\end{proof}

\begin{lemma}\label{lem:angle}
For all $z,w\in K_n$,
\[
 \langle z,w\rangle
 \ge\frac1{4n}\normone z\normone w.
\]
\end{lemma}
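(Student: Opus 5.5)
The plan is to expand both vectors in the prefix-cut basis of Section~\ref{sec:path}, where every coefficient is nonnegative and every pairwise inner product is controlled by \eqref{eq:prefix-inner}. This reduces the inequality to one for pairs of basis vectors.

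\emph{Expansion.} For $z\in K_n$ I set $\alpha_p=z_p-z_{p+1}\ge0$ for $1\le p<n$, so $\alpha=Dz$ with the path-difference matrix $D$. Since $\one^{\tr}z=0$, Lemma~\ref{lem:tree-coordinates} gives $z=(I-J)z=HDz$, that is,
\[
 z=\sum_{p=1}^{n-1}\alpha_pu_p,\qquad \alpha_p\ge0 .
\]
Likewise $w=\sum_q\beta_qu_q$ with $\beta_q=w_q-w_{q+1}\ge0$.

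\emph{Inner product.} By bilinearity,
\[
 \langle z,w\rangle=\sum_{p,q}\alpha_p\beta_q\langle u_p,u_q\rangle .
\]
Every term is nonnegative, since $\langle u_p,u_q\rangle>0$ by \eqref{eq:prefix-inner}.

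\emph{Norms.} Next I compute $\normone{u_p}$. The vector $u_p$ has $p$ entries equal to $(n-p)/n$ and $n-p$ entries equal to $-p/n$, so
\[
 \normone{u_p}=\frac{2p(n-p)}{n}=2s_p .
\]
The triangle inequality and $\alpha_p\ge0$ then give
\[
 \normone z\le2\sum_p\alpha_ps_p,
 \qquad
 \normone w\le2\sum_q\beta_qs_q .
\]
Hence
\[
 \frac1{4n}\normone z\normone w\le\sum_{p,q}\alpha_p\beta_q\,\frac{s_ps_q}{n}.
\]

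\emph{Termwise comparison.} It now suffices to show $\langle u_p,u_q\rangle\ge s_ps_q/n$ for all $p,q$. This is exactly the second part of \eqref{eq:prefix-inner}, namely $s_ps_q/\langle u_p,u_q\rangle\le n$, which holds in both orders of $p$ and $q$. Multiplying by $\alpha_p\beta_q\ge0$ and summing over $p,q$ proves the lemma.

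There is no real obstacle here. The only point needing care is the sign bookkeeping: the triangle inequality bounds the norms from above, while all terms of the inner product are nonnegative, so both inequalities point the same way. The monotonicity built into $K_n$ is precisely what makes every $\alpha_p$ and $\beta_q$ nonnegative.
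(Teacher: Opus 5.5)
Your proof is correct and follows the paper's argument essentially verbatim: you expand $z,w$ in the prefix-cut vectors $u_p$ with nonnegative coefficients, use $\normone{u_p}=2s_p$ together with \eqref{eq:prefix-inner} to get $\langle u_p,u_q\rangle\ge\frac1{4n}\normone{u_p}\normone{u_q}$ termwise, and finish with the triangle inequality. Your appeal to $HD=I-J$ from Lemma~\ref{lem:tree-coordinates} to justify the expansion of $z\in K_n$ is a small but welcome detail that the paper leaves implicit.
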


\begin{proof}
Every $z\in K_n$ has the expansion
\[
 z=\sum_{p=1}^{n-1}(z_p-z_{p+1})u_p
\]
with nonnegative coefficients. Since
\[
 \normone{u_p}=\frac{2p(n-p)}n,
\]
for $p\le q$ equation \eqref{eq:prefix-inner} gives
\[
 \frac{\langle u_p,u_q\rangle}
      {\normone{u_p}\normone{u_q}}
 =\frac n{4q(n-p)}
 \ge\frac1{4n},
\]
and the same bound holds for $q\le p$. Expanding both vectors and applying
this estimate term by term yields
\[
 \langle z,w\rangle
 \ge\frac1{4n}
 \left(\sum_pa_p\normone{u_p}\right)
 \left(\sum_qb_q\normone{u_q}\right),
\]
where $a_p,b_q\ge0$ are the expansion coefficients. The triangle inequality
then gives the result.
\end{proof}

\subsection{The two-factor estimate}

\begin{proposition}\label{prop:two-factor}
Let $P,Q$ be totally nonnegative doubly stochastic $n\times n$ matrices,
where $n\ge2$. If
\[
 \normmax{P-J},\ \normmax{Q-J}
 \le\frac1{64n^2},
\]
then $B(PQ)$ is entrywise nonnegative and
\[
 PQ\in\PP_4(P_n).
\]
\end{proposition}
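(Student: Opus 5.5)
The plan is to verify entrywise nonnegativity of $B(PQ)$ one entry at a time, using the factorization \eqref{eq:mixed-product}, and then invoke Lemma~\ref{lem:path-convex}. Fix $1\le i,j<n$. Since $\normmax{P-J},\normmax{Q-J}\le 1/(64n^2)<1/n$, both matrices are entrywise positive. I will take
\[
a=(P_{ik})_k,\quad b=(P_{i+1,k})_k,\quad d=a-b,
\]
and
\[
a'=(Q_{kj})_k,\quad b'=(Q_{k,j+1})_k,\quad e=a'-b'.
\]
These are strictly positive probability vectors, because rows and columns of a doubly stochastic matrix sum to one. By the $2\times2$ minor inequalities recorded after Lemma~\ref{lem:TN}, the ratios $a_k/b_k$ and $a'_k/b'_k$ are nonincreasing in $k$. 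Moreover, $|nb_k-1|\le n\normmax{P-J}\le 1/(64n)=:\eta$, and the same bound holds for $b'$.

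Lemma~\ref{lem:relative} then gives $z,w\in K_n$ with $\normone{d-z}\le\varepsilon\normone d$ and $\normone{e-w}\le\varepsilon\normone e$, where $\varepsilon=2\eta/(1-\eta)\le 1/(31n)$. In particular $\normone z\ge(1-\varepsilon)\normone d$ and $\normone z\le(1+\varepsilon)\normone d$, and similarly for $w$.

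Next I will write
\[
\langle d,e\rangle=\langle z,w\rangle+\langle d-z,e\rangle+\langle z,e-w\rangle .
\]
Lemma~\ref{lem:angle} bounds the main term from below by $\frac1{4n}\normone z\normone w\ge\frac{(1-\varepsilon)^2}{4n}\normone d\normone e$. For the error terms I use the crude estimate $|\langle x,y\rangle|\le\normone x\,\lVert y\rVert_\infty\le\normone x\normone y$. This gives
\[
|\langle d-z,e\rangle|+|\langle z,e-w\rangle|\le \varepsilon\normone d\normone e+(1+\varepsilon)\varepsilon\normone d\normone e .
\]
Hence
\[
\langle d,e\rangle\ge\Bigl(\frac{(1-\varepsilon)^2}{4n}-\varepsilon(2+\varepsilon)\Bigr)\normone d\normone e .
\]
With $\varepsilon\le 1/(31n)$, the bracket is at least $\frac{(1-1/62)^2}{4n}-\frac{2.1}{31n}>0$. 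Thus $B(PQ)_{ij}\ge0$; if $d=0$ or $e=0$ the entry is simply $0$.

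Finally, $PQ$ is doubly stochastic. By \eqref{eq:nonexpansive}, $\normmax{PQ-J}\le\normmax{Q-J}\le1/(64n^2)\le1/(4n)$, so Lemma~\ref{lem:path-convex} yields $PQ\in\PP_4(P_n)$. Note that this lemma requires only nonnegativity of $B$ and the max-norm bound, not total nonnegativity of $PQ$.

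The main obstacle I expect is the constant bookkeeping in the error estimate. The angle bound of Lemma~\ref{lem:angle} degrades like $1/(4n)$, so the relative error $\varepsilon$ from Lemma~\ref{lem:relative} must be $O(1/n)$ with a small enough constant. This is exactly where the hypothesis $1/(64n^2)$, rather than merely $O(1/n)$ closeness, is spent, since $\eta=n\normmax{\cdot-J}$.
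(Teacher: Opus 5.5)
Your proposal is correct and follows the paper's proof essentially step for step. It uses the same row/column difference vectors from \eqref{eq:mixed-product}, Lemma~\ref{lem:relative} with $\eta=1/(64n)$, the same three-term splitting around $\langle z,w\rangle$ combined with Lemma~\ref{lem:angle}, and the same final appeal to \eqref{eq:nonexpansive} and Lemma~\ref{lem:path-convex}. The only difference is cosmetic: you keep the sharper bound $\varepsilon=2/(64n-1)\le 1/(31n)$ where the paper rounds it to $\kappa\le 1/(16n)$, which only changes the final positive constant.
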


\begin{proof}
The hypothesis makes every entry of $P$ and $Q$ positive. Fix
$1\le i,j<n$, and let $d,e$ be the vectors in \eqref{eq:mixed-product}.
Their coordinate sums are zero, and total nonnegativity gives the monotone
ratio hypotheses of Lemma~\ref{lem:relative}.

Since
\[
 |nP_{i+1,k}-1|,
 \ |nQ_{k,j+1}-1|
 \le\frac1{64n},
\]
Lemma~\ref{lem:relative}, with $\eta=1/(64n)$, gives $z,w\in K_n$ such that
\[
 \normone{d-z}\le\kappa\normone d,
 \qquad
 \normone{e-w}\le\kappa\normone e,
 \qquad
 \kappa=\frac2{64n-1}\le\frac1{16n}.
\]
Hence
\[
 \normone z\ge(1-\kappa)\normone d,
 \qquad
 \normone w\ge(1-\kappa)\normone e.
\]
Lemma~\ref{lem:angle} therefore yields
\[
 \langle z,w\rangle
 \ge\frac{(1-\kappa)^2}{4n}\normone d\normone e.
\]
Moreover,
\[
 \langle d,e\rangle-\langle z,w\rangle
 =\langle d-z,e\rangle+\langle z,e-w\rangle.
\]
Using $|\langle x,y\rangle|\le\normone x\normone y$ and
$\normone z\le(1+\kappa)\normone d$, we obtain
\begin{align*}
 \langle d,e\rangle
 &\ge
 \left(
 \frac{(1-\kappa)^2}{4n}-2\kappa-\kappa^2
 \right)\normone d\normone e\\
 &\ge
 \left(\frac1{8n}-\frac9{256n^2}\right)
 \normone d\normone e
 \ge\frac1{16n}\normone d\normone e
 \ge0.
\end{align*}
Thus $B(PQ)$ is entrywise nonnegative by \eqref{eq:mixed-product}. Finally,
\eqref{eq:nonexpansive} gives
\[
 \normmax{PQ-J}
 \le\frac1{64n^2}
 \le\frac1{4n},
\]
so Lemma~\ref{lem:path-convex} gives $PQ\in\PP_4(P_n)$.
\end{proof}

\begin{proof}[Proof of Proposition~\ref{prop:local-shortening}]
If $G$ is not a path, then by \eqref{eq:nonexpansive},
\[
 \normmax{QP-J}
 \le\normmax{P-J}
 \le\delta_n,
\]
and Proposition~\ref{prop:nonpath-neighborhood} gives
$QP\in\PP_6(G)$.

Suppose that $G$ is a path and label its vertices in path order. By
Lemma~\ref{lem:TN}, both $P$ and $Q$ are totally nonnegative. Since
\[
 \delta_n=\frac1{4n^5}\le\frac1{64n^2}
 \qquad(n\ge3),
\]
Proposition~\ref{prop:two-factor}, applied to $(Q,P)$, gives
$QP\in\PP_4(G)$.
\end{proof}

This completes the proof of Theorem~\ref{thm:operators}.

\section{A five-vertex repetition counterexample}\label{sec:repetition}

Consider the four-cycle
\[
 b-c-x-d-b
\]
with a pendant vertex $a$ adjacent to $b$. The target is $x$, and the initial
weights, in the order $(a,b,x,c,d)$, are
\[
 \omega=(2,5,1,0,0).
\]

\subsection{An optimal strategy}

Consider the four operations
\[
 \{x,c\},\qquad \{b,c\},\qquad \{a,b,x,d\},\qquad \{x,c\}.
\]
The third set induces the path $a-b-d-x$. The successive states are
\begin{center}
\small
\setlength{\tabcolsep}{5pt}
\renewcommand{\arraystretch}{1.08}
\begin{tabular}{c@{\qquad}ccccc}
\toprule
 & $a$ & $b$ & $x$ & $c$ & $d$\\
\midrule
initial
  & $2$ & $5$ & $1$ & $0$ & $0$\\
after $\{x,c\}$
  & $2$ & $5$ & $1/2$ & $1/2$ & $0$\\
after $\{b,c\}$
  & $2$ & $11/4$ & $1/2$ & $11/4$ & $0$\\
after $\{a,b,x,d\}$
  & $21/16$ & $21/16$ & $21/16$ & $11/4$ & $21/16$\\
after $\{x,c\}$
  & $21/16$ & $21/16$ & $65/32$ & $65/32$ & $21/16$\\
\bottomrule
\end{tabular}
\end{center}
Thus the unrestricted optimum is at least $65/32$. A repetition-free
strategy attains $2$ by averaging once on $\{b,x,c\}$.

\subsection{A finite certificate}

For a chronological strategy $\sigma=(S_1,\ldots,S_k)$, symmetry of the
averaging matrices gives
\[
 (A_{S_k}\cdots A_{S_1}\omega)_x
 =\langle \omega,A_{S_1}\cdots A_{S_k}e_x\rangle.
\]
Hence we may work with reverse profiles from $e_x$ and the linear functional
\[
 F(p)=2p_a+5p_b+p_x.
\]
Let $A=A_{\{x,c\}}$. The symmetry exchanging $c$ and $d$ fixes the graph,
$e_x$, and $F$.

We use two finite rational invariant polytopes. The complete supplementary
certificate is part of the proof: it lists all rational generating points and
all convex identities used below, together with an independent verifier using
exact rational arithmetic.

\begin{lemma}\label{lem:repetition-certificate}
There exist rational polytopes \(K,H\subset\mathbb R^5\) such that
\begin{equation}\label{eq:rep-fullinv}
e_x\in K\cap H,
\qquad
A_SK\subseteq K
\quad\text{for every connected \(S\)},
\end{equation}
and
\begin{equation}\label{eq:rep-restinv}
A_SH\subseteq H
\quad\text{for every connected \(S\ne\{x,c\}\)}.
\end{equation}
For these polytopes,
\begin{align}
\max_{p\in K} F(p) &= \frac{65}{32},
\label{eq:rep-fullbound}\\
\max_{q\in H} F(Aq) &= 2.
\label{eq:rep-restbound}
\end{align}
In addition, for every connected \(S\) with \(|S|\ge 2\) and $S\notin\bigl\{\{x,c\},\{x,d\}\bigr\}$,
we have
\begin{equation}\label{eq:rep-lastbound}
\max_{p\in K} F(A_Sp)\le 2.
\end{equation}
Finally,
\begin{equation}\label{eq:rep-depth3}
\max_{\substack{
0\le k\le3\\
S_1,\ldots,S_k\ \text{nontrivial connected}
}}
F(A_{S_1}\cdots A_{S_k}e_x)
=2.
\end{equation}
\end{lemma}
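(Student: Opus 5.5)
The plan is to construct $K$ and $H$ explicitly as convex hulls of finitely many rational points, obtained by closure, and then to certify every claim by finite exact linear programming.

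\textbf{Step 1: enumerate the connected sets.} List all nontrivial connected subsets $S$ of the five-vertex graph; there are only a few dozen. Note that each $A_S$ maps a polytope $\conv\{g_1,\dots,g_m\}$ into itself if and only if every image $A_Sg_i$ lies in that polytope. Each such membership is a small rational linear feasibility problem, and its solution is an explicit convex combination. Invariance is therefore certified by finitely many convex identities, which is exactly what the supplementary certificate lists.

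\textbf{Step 2: build $K$.} Start from $\{e_x\}$ and repeatedly apply all $A_S$. After each round, prune to the extreme points of the convex hull, exploiting the $c\leftrightarrow d$ symmetry to halve the work. Naive closure need not terminate, since the orbit of $e_x$ is infinite. When it stalls, I would replace the current hull by a slightly enlarged rational polytope cut out by guessed inequalities. These inequalities would be valid for reverse profiles: $p\ge0$, $\one^{\tr}p=1$, and order-type constraints suggested by the geometry, such as $p_x\ge p_c$-type bounds and bounds on $p_a$ relative to $p_b$. Each guess is then tested for invariance as in Step 1. The only requirement on the enlargement is that $\max_K F$ stays exactly $65/32$, which the optimal strategy of the previous subsection shows is a lower bound. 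That equality is then checked by evaluating $F$ at the generators, which gives \eqref{eq:rep-fullbound}.

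\textbf{Step 3: build $H$ and check the remaining bounds.} Construct $H$ in the same way, but closing only under $S\ne\{x,c\}$; the symmetric set $\{x,d\}$ is \emph{not} excluded. Then verify $\max_{q\in H}F(Aq)\le2$ by evaluating at the generators, giving \eqref{eq:rep-restbound}. For \eqref{eq:rep-lastbound}, evaluate $F(A_Sg)$ for each generator $g$ of $K$ and each allowed $S$. Finally, \eqref{eq:rep-depth3} is a brute-force enumeration of all products of at most three nontrivial connected averaging operators applied to $e_x$. This uses exact rationals and is finite, giving maximum $2$, attained by $A_{\{b,x,c\}}e_x$.

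\textbf{Main obstacle.} The hard part is Step 2: finding finitely generated invariant polytopes tight enough to give exactly $65/32$ and $2$. Closure by iteration may not terminate, so the candidate polytopes will need to be found by computer search, with the inequalities adjusted by hand. I expect $H$ to be the tighter of the two. It must capture every profile reachable from $e_x$ by reverse products avoiding $\{x,c\}$, yet still give $F(Aq)\le2$. The natural first try is the hull of all such profiles up to moderate depth, together with the limits of the idempotent-like cycles (for example products near $J$, whose $F$-value is $8/5$). Once candidates are found, every claim reduces to checking the listed rational convex identities, which is the verification the lemma relies on.
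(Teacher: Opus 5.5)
Your verification scheme is the paper's. Its proof consists of explicit rational generators for $K$ ($27$ points) and $H$ ($30$ points), one exact convex identity per generator and per admissible set, maxima read off at the generators, and an exact enumeration of reverse profiles of depth at most three. The graph has $16$ nontrivial connected sets, matching $27\cdot16=432$ and $30\cdot15=450$ identities.

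\textbf{The gap.} You stop before the only substantive part. The lemma is an existence statement whose whole content is that such polytopes exist. Your Step~2 is a search heuristic with no guarantee of success, as you say yourself. Until the generators are exhibited, only the finite enumeration behind \eqref{eq:rep-depth3} is a well-defined check, and the claims \eqref{eq:rep-fullinv}--\eqref{eq:rep-lastbound} remain unproved.

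This is most serious for $H$. Theorem~\ref{thm:operators} concerns the full family of connected sets, and nothing in the paper guarantees that the semigroup generated by the $A_S$ with $S\ne\{x,c\}$ leaves invariant a finitely generated polytope containing $e_x$ on which $F\circ A\le2$. That is precisely the computational fact the supplement supplies.

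For $K$, by contrast, your worry that closure might not terminate is unfounded. An infinite orbit does not prevent the hull from stabilizing. By Theorem~\ref{thm:operators}, the hull of all reverse profiles equals the hull of those of length at most $L_5$. That bound is computationally useless, however, and says nothing about the value $65/32$ or about \eqref{eq:rep-lastbound}.

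\textbf{Errors in your search criteria.}
\begin{enumerate}
\item It is not true that the only requirement on an enlarged $K$ is $\max_KF=65/32$. Invariance together with that value only gives $F(A_Sp)\le65/32$ on $K$. By contrast, \eqref{eq:rep-lastbound} demands $F(A_Sp)\le2$ for each of the $14$ sets other than $\{x,c\}$ and $\{x,d\}$. An enlargement preserving $65/32$ can still destroy \eqref{eq:rep-lastbound}, so that inequality must be part of your acceptance test.
\item Your sample cutting inequalities fail on genuine reverse profiles. For instance, $A_{\{x,d\}}A_{\{b,x,c\}}A_{\{x,c\}}e_x=(0,\tfrac13,\tfrac16,\tfrac13,\tfrac16)$ has $p_x<p_c$. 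Similarly, $A_{\{b,d\}}A_{\{a,b,c\}}A_{\{x,c\}}e_x=(\tfrac16,\tfrac1{12},\tfrac12,\tfrac16,\tfrac1{12})$ has $p_a>p_b$. So no invariant polytope containing $e_x$ satisfies either inequality.
\item The $c\leftrightarrow d$ symmetry reduction is unavailable for $H$, since $\{x,c\}$ is excluded while $\{x,d\}$ is not.
\end{enumerate}

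To turn the plan into a proof, you must actually produce the two generator lists and the $882$ identities, or some equivalent exact certificate.
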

\begin{proof}[Certificate verification]
The polytope $K$ has $27$ listed generating points and $H$ has $30$. For
each relevant connected set $S$, the supplement gives exact rational convex
identities certifying the required invariance: $432$ identities for $K$ and
$450$ for $H$. The optimization statements are checked on the listed
generating points. The supplied program \texttt{verify\_certificate.py} also
enumerates every reverse profile reachable in at most three nontrivial steps,
which proves \eqref{eq:rep-depth3}. It uses only exact rational arithmetic; no
floating-point tolerance or optimization solver enters the verification.
\end{proof}

\begin{proof}[Proof of Theorem~\ref{thm:repetition}]
By \eqref{eq:rep-fullinv}, every reverse profile belongs to $K$. Hence
\eqref{eq:rep-fullbound} gives the unrestricted upper bound $65/32$, attained
by the four-step strategy above.

Let the reverse word be repetition-free, and remove singleton moves. If no
nontrivial move remains, then the value is $F(e_x)=1\le2$. Otherwise, let $S$
be the last nontrivial averaged set. If $S$ is neither $\{x,c\}$ nor
$\{x,d\}$, the preceding profile lies in $K$, and
\eqref{eq:rep-lastbound} gives final value at most $2$.

If $S=\{x,c\}$, then no earlier move used this set. Starting from $e_x$,
\eqref{eq:rep-restinv} therefore keeps every preceding profile in $H$, and
\eqref{eq:rep-restbound} again gives final value at most $2$. The case
$S=\{x,d\}$ follows by exchanging $c$ and $d$.

Thus every repetition-free strategy has value at most $2$, while the one-step
strategy on $\{b,x,c\}$ attains $2$. Finally, \eqref{eq:rep-depth3} shows
that no strategy of length at most three can attain $65/32$, whereas the
four-step strategy above does. Hence the minimum length of an optimal strategy
is $4$. This proves the theorem.
\end{proof}

\medskip
\noindent\textbf{Supplementary material.}
The supplementary archive contains the complete rational certificate, the
exact verification program, its output, and a self-contained note recording
the certificate. These files verify all $882$ convex identities and the
depth-three enumeration used above.

\section*{Acknowledgments}
The authors developed the proof strategy and the principal mathematical ideas in this paper. GPT6 Astra played a key role in Section 4, where Proposition \ref{prop:nonpath-neighborhood} is established. Its most important contribution was the discovery that a matrix sufficiently close to $J$ can be written as a convex combination of short strategy operators. GPT6 Astra was also used for drafting and language editing, exploratory calculations, and assistance with computational checks. The authors independently verified every mathematical argument, numerical value, exact rational identity, and computer-assisted certificate appearing in the paper, and take full responsibility for the contents.

We thank Hehui Wu and Ningyuan Yang for their early involvement in discussions and for their suggestions.

\bibliographystyle{amsplain}
\bibliography{ref}

@article{MSW2024,
  author = {Movassagh, Ramis and Szegedy, Mario and Wang, Guanyang},
  title = {Repeated Averages on Graphs},
  journal = {The Annals of Applied Probability},
  volume = {34}, number = {4}, pages = {3781--3819},
  year = {2024}, doi = {10.1214/24-AAP2050}
}

@unpublished{GHH+2025,
  author = {Gollin, J. Pascal and Hendrey, Kevin and Huang, Hao and Huynh, Tony and Mohar, Bojan and Oum, Sang-il and Yang, Ningyuan and Yu, Wei-Hsuan and Zhu, Xuding},
  title = {Sharing Tea on a Graph},
  note = {Accepted for publication in {Combinatorial Theory}; arXiv:2405.15353v2},
  year = {2026}
}

@unpublished{HH2015,
  author = {H{\"a}ggstr{\"o}m, Olle and Hirscher, Timo},
  title = {Water Transport on Graphs},
  note = {arXiv:1504.03978}, year = {2015}
}

@article{Tao2025,
  author = {Tao, Tianyi},
  title = {Water Transport on a Path: Finding the Strategy Through Its Existence},
  journal = {Advances and Applications in Discrete Mathematics},
  volume = {42}, number = {5}, pages = {471--480},
  year = {2025}, doi = {10.17654/0974165825031}
}

@unpublished{Vilkas2025,
  author = {Vilkas, Timo}, title = {Water Transport on Finite Graphs},
  note = {arXiv:2501.16911}, year = {2025}
}

@article{NOOT2009,
  author = {Nedi\'c, Angelia and Olshevsky, Alex and Ozdaglar, Asuman and Tsitsiklis, John N.},
  title = {On Distributed Averaging Algorithms and Quantization Effects},
  journal = {IEEE Transactions on Automatic Control},
  volume = {54}, number = {11}, pages = {2506--2517},
  year = {2009}, doi = {10.1109/TAC.2009.2031203}
}

@article{Haggstrom2012,
  author = {H{\"a}ggstr{\"o}m, Olle},
  title = {A Pairwise Averaging Procedure with Application to Consensus Formation in the {Deffuant} Model},
  journal = {Acta Applicandae Mathematicae},
  volume = {119}, number = {1}, pages = {185--201},
  year = {2012}, doi = {10.1007/s10440-011-9668-9}
}

@article{HSF2017,
  author = {Hay, M. J. and Schiff, J. and Fisch, N. J.},
  title = {On Extreme Points of the Diffusion Polytope},
  journal = {Physica A: Statistical Mechanics and its Applications},
  volume = {473}, pages = {225--236},
  year = {2017}, doi = {10.1016/j.physa.2017.01.038}
}

@article{AL2012,
  author = {Aldous, David and Lanoue, Daniel},
  title = {A Lecture on the Averaging Process},
  journal = {Probability Surveys},
  volume = {9}, pages = {90--102},
  year = {2012}, doi = {10.1214/11-PS184}
}

@article{CDSZ2022,
  author = {Chatterjee, Sourav and Diaconis, Persi and Sly, Allan and Zhang, Lingfu},
  title = {A Phase Transition for Repeated Averages},
  journal = {The Annals of Probability},
  volume = {50}, number = {1}, pages = {1--17},
  year = {2022}, doi = {10.1214/21-AOP1526}
}

\bigskip
{\small
\noindent\textit{\textsuperscript{1}Western Institute of Maternal and Child Medicine, West China Second University Hospital, Sichuan University
Chengdu 610041, Sichuan, China}\par
\noindent\textit{\textsuperscript{2}Shanghai Institute for Mathematics and Interdisciplinary Sciences, Shanghai 200433, China.}\par
}

\end{document}